\newtheorem{theorem}{Theorem}
\theoremstyle{plain}
\newtheorem{corollary}{Corollary}
\newtheorem{lemma}{Lemma}
\newtheorem{proposition}{Proposition}
\numberwithin{equation}{section}
\begin{document}
\title[On metric connections with torsion on the cotangent bundle]{On metric
connections with torsion on the cotangent bundle with modified Riemannian
extension}
\author{Lokman BILEN}
\address{Igdir University, Faculty of Science and Letters, Department of
Mathematics and Computer, 76000, Igdir-Turkey.}
\email{lokman.bilen@igdir.edu.tr}
\author{Aydin GEZER}
\address{Ataturk University, Faculty of Science, Department of Mathematics,
25240, Erzurum-Turkey.}
\email{agezer@atauni.edu.tr}

\begin{abstract}
Let $M$ be an $n-$dimensional differentiable manifold equipped with a
torsion-free linear connection $\nabla $ and $T^{\ast }M$ its cotangent
bundle. The present paper aims to study a metric connection $\widetilde{%
\nabla }$ with nonvanishing torsion on $T^{\ast }M$ with modified Riemannian
extension ${}\overline{g}_{\nabla ,c}$. First, we give a characterization of
fibre-preserving projective vector fields on $(T^{\ast }M,{}\overline{g}%
_{\nabla ,c})$ with respect to the metric connection $\widetilde{\nabla }$.
Secondly, we study conditions for $(T^{\ast }M,{}\overline{g}_{\nabla ,c})$
to be semi-symmetric, Ricci semi-symmetric, $\widetilde{Z}$ semi-symmetric
or locally conharmonically flat with respect to the metric connection $%
\widetilde{\nabla }$. Finally, we present some results concerning the
Schouten-Van Kampen connection associated to the Levi-Civita connection $%
\overline{\nabla }$ of the modified Riemannian extension $\overline{g}%
_{\nabla ,c}$.

\textit{Mathematics subject classification 2010}. 53C07, 53C35, 53A45.

\textit{Key words and phrases}. Cotangent bundle, fibre-preserving
projective vector field, metric connection, Riemannian extension,
semi-symmetry.
\end{abstract}

\maketitle

\section{\protect\bigskip \textbf{Introduction}}

Let $(M,\nabla )$ be an $n-$dimensional differentiable manifold equipped
with a torsion-free linear connection $\nabla $. Denote by $T^{\ast }M$ the
cotangent bundle of $M$ and let $\pi $ be the natural projection $T^{\ast
}M\rightarrow M$. The vertical distribution $V$ on $T^{\ast }M$ ($V$ is the
kernel of the submersion $T^{\ast }M\rightarrow M$), which is the integrable
distribution. If $M$ is a paracompact manifold there exists a $C^{\infty }-$%
distribution $H$ on $T^{\ast }M$ which is supplementary to the vertical
distribution $V$, such as the Whitney sum $TT^{\ast }M=HT^{\ast }M\oplus
VT^{\ast }M$ holds.

For the torsion-free linear connection $\nabla $ on $M$, the cotangent
bundle of $M$, $T^{\ast }M$, can be endowed with a pseudo-Riemannian metric $%
\overline{g}_{\nabla }$ of neutral signature, called the Riemannian
extension of $\nabla $, given by%
\begin{eqnarray*}
{}\overline{g}_{\nabla }(^{H}X,^{H}Y) &=&0 \\
{}\overline{g}_{\nabla }(^{H}X,^{V}\omega ) &=&\overline{g}_{\nabla
}(^{V}\omega ,^{H}X)=\omega (X) \\
{}\overline{g}_{\nabla }(^{V}\omega ,^{V}\theta ) &=&0
\end{eqnarray*}%
where $^{H}X$ and $^{H}Y$ denote the horizontal lifts of the vector fields $%
X $ and $Y$, and $^{V}\omega $ and $^{V}\theta $ denote the vertical lifts
of the covectors ($1-$forms) $\omega $ and $\theta $. Thus, the Riemannian
extension of $(M,\nabla )$ is a pseudo-Riemannian manifold $(T^{\ast }M,%
\overline{g}_{\nabla })$. Riemannian extensions were first defined and
studied by Patterson and Walker \cite{Patterson} and then investigated in
Afifi \cite{Afifi}. Moreover, Riemannian extensions were also considered in
Garcia-Rio et al. \cite{Garcia1} in relation to Osserman manifolds (see also
Derdzinski \cite{Derdzinski}). For further references relation to Riemannian
extensions, see \cite{Aslanci,Dryuma,Honda,Mok,Toomanian,Vanhecke,Willmore}.
Classical Riemannian extensions have been generalized in several ways, see,
as an example \cite{Kowalski}. In \cite{Calvino1,Calvino2}, the authors
introduced another generalization which is called modified Riemannian
extension. For a symmetric $(0,2)$-tensor field $c$ on $(M,\nabla )$, this
metric is given by $\overline{g}_{\nabla ,c}=\overline{g}_{\nabla }+\pi
^{\ast }c$, that is, 
\begin{eqnarray*}
{}\overline{g}_{\nabla ,c}(^{H}X,^{H}Y) &=&c(X,Y) \\
{}\overline{g}_{\nabla ,c}(^{H}X,^{V}\omega ) &=&\overline{g}_{\nabla
,c}(^{V}\omega ,^{H}X)=\omega (X) \\
{}\overline{g}_{\nabla ,c}(^{V}\omega ,^{V}\theta ) &=&0.
\end{eqnarray*}

In this paper, we consider a metric connection $\widetilde{\nabla }$ with
nonvanishing torsion on the cotangent bundle $T^{\ast }M$ with modified
Riemannian extension ${}\overline{g}_{\nabla ,c}$. First, we give a
necessary and sufficient condition for a vector field on $(T^{\ast }M,%
\overline{g}_{\nabla ,c})$ to be fibre-preserving projective vector field on 
$T^{\ast }M$ with respect to the metric connection $\widetilde{\nabla }$.
This condition is represented by a set of relations involving certain tensor
fields on $M$. Secondly, we investigate the conditions for the cotangent
bundle $(T^{\ast }M,\overline{g}_{\nabla ,c})$ to be semi-symmetric, Ricci
semi-symmetric, $\widetilde{Z}$ semi-symmetric and locally conharmonically
flat with respect to the metric connection $\widetilde{\nabla }$. Finally,
we show that the Schouten-Van Kampen connection associated to the
Levi-Civita connection $\overline{\nabla }$ of the modified Riemannian
extension $\overline{g}_{\nabla ,c}$ is equal to the horizontal lift $%
^{H}\nabla $ of the torsion-free linear connection $\nabla $ to $T^{\ast }M$
and present a result concerning the curvature tensor of the Schouten-Van
Kampen connection.

The manifolds, tensor fields and geometric objects we consider in this paper
are assumed to be differentiable of class $C^{\infty }$. Einstein's
summation convention is used, the range of the indices $h,i,j,k,l,m,r,$
being always $\{1,...,n\}$.

\section{\noindent Preliminaries}

\noindent We refer to \cite{YanoIshihara:DiffGeo} for further details
concerning the material of this section. Let $M$ be an $n-$dimensional
differentiable manifold with a torsion-free linear connection $\nabla $ and
denote by $\pi :T^{\ast }M\rightarrow M$ its cotangent bundle with fibres
the cotangent spaces to $M$. Then ${}T^{\ast }M$ is a $2n-$dimensional
smooth manifold and some local charts induced naturally from local charts on 
$M$, may be used. Namely, a system of local coordinates $\left(
U,x^{i}\right) ,\mathrm{\;}i=1,...,n$ on $M$ induces on ${}T^{\ast }M$ a
system of local coordinates $\left( \pi ^{-1}\left( U\right) ,\mathrm{\;}%
x^{i},\mathrm{\;}x^{\overline{i}}=p_{i}\right) ,\mathrm{\;}\overline{i}%
=n+i=n+1,...,2n$, where $x^{\overline{i}}=p_{i}$ is the components of
covectors $p$ in each cotangent space ${}T_{x}^{\ast }M,\mathrm{\;}x\in U$
with respect to the natural coframe $\left\{ dx^{i}\right\} $.

Let $X=X^{i}\frac{\partial }{\partial x^{i}}$ and $\omega =\omega _{i}dx^{i}$
be the local expressions in $U$ of a vector field $X$ \ and a covector field 
$\omega $ on $M$, respectively. Then the vertical lift $^{V}\omega $ of $%
\omega $, the horizontal lift $^{H}X$ of $X$ are given, with respect to the
induced coordinates, by\noindent 
\begin{equation*}
^{V}\omega =\omega _{i}\partial _{\overline{i}},
\end{equation*}%
and

\begin{equation*}
^{H}X=X^{i}\partial _{i}+p_{h}\Gamma _{ij}^{h}X^{j}\partial _{\overline{i}}
\end{equation*}%
where $\partial _{i}=\frac{\partial }{\partial x^{i}}$, $\partial _{%
\overline{i}}=\frac{\partial }{\partial x^{\overline{i}}}$ and $\Gamma
_{ij}^{h}$ are the coefficients of $\nabla $ on $M$.

Next, we can introduce a frame field on each induced coordinate neighborhood 
$\pi ^{-1}(U)$ of $T^{\ast }M$. It is called the adapted frame and consists
of the following $2n$ linearly independent vector fields $\left\{ E_{\beta
}\right\} =\left\{ E_{j},E_{\overline{j}}\right\} $:%
\begin{equation*}
\left\{ 
\begin{array}{c}
E_{j}=\text{ }\partial _{j}+p_{a}\Gamma _{hj}^{a}\partial _{\overline{h}} \\ 
E_{\overline{j}}=\text{ }\partial _{\overline{j}}.%
\end{array}%
\right.
\end{equation*}%
The indices $\alpha ,\beta ,\gamma ,...=1,...,2n$ indicate the indices with
respect to the adapted frame. The Lie brackets of the adapted frame of $%
T^{\ast }M$ satisfy the following identities:%
\begin{equation}
\left\{ 
\begin{array}{c}
\left[ E_{i},E_{j}\right] =p_{s}R_{ijl}^{\text{ \ \ }s}E_{\overline{l}}, \\ 
\left[ E_{i},E_{\overline{j}}\right] =-\Gamma _{il}^{j}E_{\overline{l}}, \\ 
\left[ E_{\overline{i}},E_{\overline{j}}\right] =0,%
\end{array}%
\right.  \label{BA2.3}
\end{equation}%
where $R_{ijl}^{\text{ \ \ }s}$ denote the coefficients of the curvature
tensor $R$ of $\nabla $ on $M$.\bigskip

With respect to the adapted frame $\left\{ E_{\beta }\right\} $, the vector
fields $^{V}\omega $ and $^{H}X$ on $T^{\ast }M$ has the components%
\begin{equation*}
^{V}\omega =\left( 
\begin{array}{l}
0 \\ 
\omega _{j}%
\end{array}%
\right) \text{ and }^{H}X=\left( 
\begin{array}{l}
X^{j} \\ 
0%
\end{array}%
\right) .
\end{equation*}%
\bigskip

\section{The metric connection with nonvanishing torsion on the cotangent
bundle with respect to modified Riemannian extension}

Let us consider $T^{\ast }M$ equipped with the modified Riemannian extension 
$\overline{g}_{\nabla ,c}$ for a given torsion-free connection $\nabla $ on $%
M$. In adapted frame $\left\{ E_{\beta }\right\} $, the modified Riemannian
extension $(\overline{g}_{\nabla ,c})_{\beta \gamma }$ and its inverse $(%
\overline{g}_{\nabla ,c})^{\beta \gamma }$ have in the following forms:%
\begin{equation}
(\overline{g}_{\nabla ,c})_{\beta \gamma }=\left( 
\begin{array}{cc}
{c}_{ij} & {\delta }_{i}^{j} \\ 
{\delta }_{j}^{i} & {0}%
\end{array}%
\right) .\text{ }  \label{BA3.1}
\end{equation}%
\begin{equation}
(\overline{g}_{\nabla ,c})^{\beta \gamma }=\left( 
\begin{array}{cc}
{0} & {\delta }_{j}^{i} \\ 
{\delta }_{i}^{j} & -{c}_{ij}%
\end{array}%
\right)  \label{BA3.2}
\end{equation}%
where $c_{ij}$ are the components of the symmetric $(0,2)-$tensor field $c$
on $(M,\nabla )$.

For the Levi-Civita connection $\overline{\nabla }$ of the modified
Riemannian extension $\overline{g}_{\nabla ,c}$, we get:

\begin{proposition}
Let $\nabla $ be a torsion-free linear connection on $M$ and $T^{\ast }M$ be
the cotangent bundle with the modified Riemann extension $\overline{g}%
_{\nabla ,c}$ over $(M,\nabla )$. The Levi-Civita connection $\overline{%
\nabla }$ of $(T^{\ast }M,\overline{g}_{\nabla ,c})$ is given by%
\begin{equation}
\left\{ 
\begin{array}{c}
\overline{\nabla }_{E_{\overline{i}}}E_{\overline{j}}=0,\text{ }\overline{%
\nabla }_{E_{\overline{i}}}E_{j}=0, \\ 
\overline{\nabla }_{E_{i}}E_{\overline{j}}=-\Gamma _{ih}^{j}E_{\overline{h}},
\\ 
\overline{\nabla }_{E_{i}}E_{j}=\Gamma _{ij}^{h}E_{h}+\{p_{s}R_{hji}^{\text{
\ \ \ \ }s}+\frac{1}{2}(\nabla _{i}c_{jh}+\nabla _{j}c_{ih}-\nabla
_{h}c_{ij})\}E_{\overline{h}}%
\end{array}%
\right.  \label{BA3.3}
\end{equation}%
with respect to the adapted frame $\left\{ E_{\beta }\right\} $, where $%
\Gamma _{ij}^{h}$ and $R_{hji}^{\text{ \ \ \ \ }s}$ respectively denote
components of $\nabla $ and its curvature tensor field $R$ on $M$ (see, \cite%
{Gezer1}).
\end{proposition}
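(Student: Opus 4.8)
The plan is to compute the Levi-Civita connection $\overline{\nabla}$ directly from the Koszul formula applied to the adapted frame $\{E_\beta\}$, using the metric components \eqref{BA3.1} and the Lie bracket relations \eqref{BA2.3}. First I would recall the Koszul formula: for vector fields $X,Y,Z$,
\begin{equation*}
2\,\overline{g}_{\nabla,c}(\overline{\nabla}_X Y, Z) = X\,\overline{g}_{\nabla,c}(Y,Z) + Y\,\overline{g}_{\nabla,c}(X,Z) - Z\,\overline{g}_{\nabla,c}(X,Y) + \overline{g}_{\nabla,c}([X,Y],Z) - \overline{g}_{\nabla,c}([X,Z],Y) - \overline{g}_{\nabla,c}([Y,Z],X).
\end{equation*}
The key simplification is that the only nonconstant metric component in the adapted frame is $(\overline{g}_{\nabla,c})_{ij} = c_{ij} = \pi^\ast c$, which depends only on the base coordinates $x^i$; hence $E_{\overline{i}}(c_{jk}) = 0$ while $E_i(c_{jk}) = \partial_i c_{jk}$. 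All derivative terms therefore involve only $c$, and all the "connection content" enters through the Lie brackets in \eqref{BA2.3}.

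The calculation then splits into the four cases according to which of the arguments are horizontal-type ($E_i$) or vertical-type ($E_{\overline{i}}$). I would treat them in increasing order of difficulty. For $\overline{\nabla}_{E_{\overline i}}E_{\overline j}$: every metric term vanishes (all three covector-type pairings are zero and $[E_{\overline i},E_{\overline j}]=0$), so this connection is zero. For $\overline{\nabla}_{E_{\overline i}}E_j$ and $\overline{\nabla}_{E_i}E_{\overline j}$: here the surviving brackets are $[E_i,E_{\overline j}] = -\Gamma_{il}^j E_{\overline l}$, and pairing these against $E_k$ or $E_{\overline k}$ via \eqref{BA3.1} picks out the $\Gamma$'s; testing against all $2n$ basis vectors $Z = E_k, E_{\overline k}$ and raising the index with \eqref{BA3.2} produces $\overline{\nabla}_{E_i}E_{\overline j} = -\Gamma_{ih}^j E_{\overline h}$ and $\overline{\nabla}_{E_{\overline i}}E_j = 0$. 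The last case $\overline{\nabla}_{E_i}E_j$ is the substantive one: the derivative terms contribute $\tfrac12(\partial_i c_{jk} + \partial_j c_{ik} - \partial_k c_{ij})$ when tested against $E_k$, the bracket $[E_i,E_j] = p_s R_{ijl}^{\ \ \ s} E_{\overline l}$ and the two mixed brackets $[E_i,E_k],[E_j,E_k]$ contribute the curvature term, and one must carefully combine these. Converting the partials of $c$ into covariant derivatives $\nabla_i c_{jk}$ introduces compensating $\Gamma \cdot c$ terms; these recombine with the contributions of the mixed brackets so that, after testing against $E_k$ and against $E_{\overline k}$ and raising indices, one obtains exactly $\overline{\nabla}_{E_i}E_j = \Gamma_{ij}^h E_h + \{p_s R_{hji}^{\ \ \ \ s} + \tfrac12(\nabla_i c_{jh} + \nabla_j c_{ih} - \nabla_h c_{ij})\}E_{\overline h}$.

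I expect the main obstacle to be the bookkeeping in the last case: keeping the symmetrization of the $c$-terms consistent while correctly accounting for the three curvature/bracket contributions, and in particular verifying that the index pattern on $R$ comes out as $R_{hji}^{\ \ \ \ s}$ (with the contraction against $p_s$) rather than some permutation — this requires using the symmetries of the Riemann tensor of the torsion-free connection $\nabla$ and is where sign and index errors are easiest to make. A useful sanity check along the way is to set $c = 0$, recovering the known Levi-Civita connection of the classical Riemannian extension $\overline{g}_\nabla$, and to verify metric compatibility $\overline{\nabla}\,\overline{g}_{\nabla,c} = 0$ and torsion-freeness on the frame directly; since the result is quoted from \cite{Gezer1}, I would either reproduce the Koszul computation in full or simply cite it, depending on how self-contained the present paper aims to be.
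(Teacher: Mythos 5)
Your proposal is correct, and it is worth noting that the paper itself gives no proof of this proposition at all: it is quoted from \cite{Gezer1}, so there is nothing internal to compare against. The Koszul-formula computation in the adapted frame is exactly the standard way to establish \eqref{BA3.3}, and your outline goes through: testing $\overline{\nabla}_{E_i}E_j$ against $E_{\overline k}$ gives the horizontal part $\Gamma_{ij}^{h}E_h$, testing against $E_k$ gives $\frac{1}{2}(\partial_i c_{jk}+\partial_j c_{ik}-\partial_k c_{ij})+\frac{1}{2}p_s(R_{ijk}^{\ \ \ s}-R_{ikj}^{\ \ \ s}-R_{jki}^{\ \ \ s})$, and after subtracting $\Gamma_{ij}^{h}c_{hk}$ (which is exactly what converts the partials into the covariant combination $\frac{1}{2}(\nabla_i c_{jk}+\nabla_j c_{ik}-\nabla_k c_{ij})$, by the symmetry of $\Gamma$) the curvature combination collapses, via the first Bianchi identity $R_{ijk}^{\ \ \ s}+R_{jki}^{\ \ \ s}+R_{kij}^{\ \ \ s}=0$ together with antisymmetry in the first two indices, to $2p_sR_{kji}^{\ \ \ s}$, i.e.\ precisely the index pattern $R_{hji}^{\ \ \ \ s}$ you flagged as the delicate point. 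Two small precisions you should make when writing it up: in the purely vertical case it is not that ``every metric term vanishes'' --- the pairings $\overline{g}_{\nabla,c}(E_{\overline j},E_k)=\delta^j_k$ are nonzero but constant, so only their derivatives vanish --- and passing from the vanishing (or value) of all pairings $\overline{g}_{\nabla,c}(\overline{\nabla}_X Y,Z)$ for $Z$ in the frame to the stated formula uses the nondegeneracy of $\overline{g}_{\nabla,c}$, visible from \eqref{BA3.2}. With those remarks your argument is a complete, self-contained derivation of \eqref{BA3.3}, which is arguably preferable to the bare citation the paper gives.
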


If there is a Riemannian metric $g$ on $M$ such that $\nabla g=0$, then the
connection $\nabla $ is a metric connection, otherwise it is non-metric. It
is well known that a linear connection is symmetric and metric if and only
if it is the Levi-Civita connection. The Levi-Civita connection $\overline{%
\nabla }$ of the modified Riemannian extension $\overline{g}_{\nabla ,c}$ on 
$T^{\ast }M$ is the unique connection which satisfies $\overline{\nabla }%
_{\alpha }(\overline{g}_{\nabla ,c})_{\beta \gamma }=0$ and has a zero
torsion. Now we are interested in a metric connection $\widetilde{\nabla }$
of the modified Riemannian extension $\overline{g}_{\nabla ,c}$ whose
torsion tensor $\widetilde{T}_{\gamma \beta }^{\varepsilon }$ is
skew-symmetric in the indices $\gamma $ and $\beta $. Metric connection with
nonvanishing torsion on Riemannian manifolds were introduced by Hayden \cite%
{Hayden}. We denote components of the metric connection $\widetilde{\nabla }$
by $\widetilde{\Gamma }_{\alpha \beta }^{\gamma }$. The metric connection $%
\widetilde{\nabla }$ satisfies 
\begin{equation*}
\widetilde{\nabla }_{\alpha }(\overline{g}_{\nabla ,c})_{\beta \gamma }=0%
\text{ and }\widetilde{\Gamma }_{\alpha \beta }^{\gamma }-\text{ }\widetilde{%
\Gamma }_{\beta \alpha }^{\gamma }=\text{ }\widetilde{T}_{\alpha \beta
}^{\gamma }.
\end{equation*}%
When the above equation is solved with respect to $\widetilde{\Gamma }%
_{\alpha \beta }^{\gamma }$, one finds the following solution \cite{Hayden}%
\begin{equation}
\widetilde{\Gamma }_{\alpha \beta }^{\gamma }=\overline{\Gamma }_{\alpha
\beta }^{\gamma }+\widetilde{U}_{\alpha \beta }^{\gamma },  \label{BA3.4}
\end{equation}%
where $\overline{\Gamma }_{\alpha \beta }^{\gamma }$ is the components of
the Levi-Civita connection $\overline{\nabla }$ of the modified Riemannian
extension $\overline{g}_{\nabla ,c}$, 
\begin{equation}
\widetilde{U}_{\alpha \beta \gamma }=\frac{1}{2}(\widetilde{T}_{\alpha \beta
\gamma }+\widetilde{T}_{\gamma \alpha \beta }+\widetilde{T}_{\gamma \beta
\alpha })  \label{BA3.5}
\end{equation}%
and%
\begin{equation*}
\widetilde{U}_{\alpha \beta \gamma }=\widetilde{U}_{\alpha \beta }^{\epsilon
}(\overline{g}_{\nabla ,c})_{\epsilon \gamma },\text{ }\widetilde{T}_{\alpha
\beta \gamma }=\widetilde{T}_{\alpha \beta }^{\epsilon }(\overline{g}%
_{\nabla ,c})_{\epsilon \gamma }.
\end{equation*}

If we choose the torsion tensor $\widetilde{T}$ as 
\begin{equation}
\left\{ 
\begin{array}{c}
\widetilde{T}_{ij}^{\overline{r}}=-p_{s}R_{ijr}^{\text{ \ \ }s}, \\ 
otherwise=0,%
\end{array}%
\right.  \label{BA3.6}
\end{equation}%
with the help of (\ref{BA3.6}), from (\ref{BA3.5}), we find non-zero
component of $\widetilde{U}_{\alpha \beta }^{\gamma }$ as follows: 
\begin{equation*}
\widetilde{{U}}{_{ij}^{\overline{h}}}{=}p_{s}R_{jhi}^{\text{ \ \ }s}
\end{equation*}%
with respect to the adapted frame. In view of (\ref{BA3.3}) and (\ref{BA3.4}%
), we have the following proposition.

\begin{proposition}
\label{propo0}Let $\nabla $ be a torsion-free linear connection on $M$ and $%
T^{\ast }M$ be the cotangent bundle with the modified Riemann extension $%
\overline{g}_{\nabla ,c}$ over $(M,\nabla )$. The metric connection $%
\widetilde{\nabla }$ on $T^{\ast }M$ with respect to the modified Riemannian
extension $\overline{g}_{\nabla ,c}$ satisfies%
\begin{equation}
\left\{ 
\begin{array}{c}
\widetilde{\nabla }_{E_{\overline{i}}}E_{\overline{j}}=0,\text{ }\widetilde{%
\nabla }_{E_{\overline{i}}}E_{j}=0, \\ 
\widetilde{\nabla }_{E_{i}}E_{\overline{j}}=-\Gamma _{ih}^{j}E_{\overline{h}%
}, \\ 
\widetilde{\nabla }_{E_{i}}E_{j}=\Gamma _{ij}^{h}E_{h}+\frac{1}{2}(\nabla
_{i}c_{jh}+\nabla _{j}c_{ih}-\nabla _{h}c_{ij})E_{\overline{h}}%
\end{array}%
\right.  \label{BA3.7}
\end{equation}%
with respect to the adapted frame $\left\{ E_{\beta }\right\} $.
\end{proposition}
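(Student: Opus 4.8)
The plan is to read off (\ref{BA3.7}) directly from the decomposition (\ref{BA3.4}), using the components of the deformation tensor $\widetilde{U}$ that have already been computed. In terms of covariant derivatives, (\ref{BA3.4}) says that, with respect to the adapted frame, $\widetilde{\nabla}_{E_{\alpha}}E_{\beta}=\overline{\nabla}_{E_{\alpha}}E_{\beta}+\widetilde{U}_{\alpha\beta}^{\gamma}E_{\gamma}$, where $\overline{\nabla}$ is the Levi-Civita connection of $\overline{g}_{\nabla,c}$ given by (\ref{BA3.3}). Because the torsion has been prescribed by (\ref{BA3.6}), the only nonvanishing component of $\widetilde{U}$ is $\widetilde{U}_{ij}^{\overline{h}}=p_{s}R_{jhi}^{\text{ \ \ }s}$, as recorded just before the statement. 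Thus the whole proof amounts to adding this single term to the right-hand sides of (\ref{BA3.3}).

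For the three relations $\widetilde{\nabla}_{E_{\overline{i}}}E_{\overline{j}}$, $\widetilde{\nabla}_{E_{\overline{i}}}E_{j}$ and $\widetilde{\nabla}_{E_{i}}E_{\overline{j}}$ there is nothing to do: the correction carries two lower unbarred indices and one upper barred index, so it contributes zero, and these formulas are copied verbatim from (\ref{BA3.3}). The one line with content is
\begin{equation*}
\widetilde{\nabla}_{E_{i}}E_{j}=\Gamma_{ij}^{h}E_{h}+\Big\{p_{s}R_{hji}^{\text{ \ \ \ \ }s}+\tfrac{1}{2}(\nabla_{i}c_{jh}+\nabla_{j}c_{ih}-\nabla_{h}c_{ij})\Big\}E_{\overline{h}}+p_{s}R_{jhi}^{\text{ \ \ }s}E_{\overline{h}},
\end{equation*}
and here the key observation is that the two curvature terms cancel. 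Indeed, the curvature tensor of $\nabla$ is skew-symmetric in its first two (lower) indices --- this is already visible in (\ref{BA2.3}) from $[E_{i},E_{j}]=-[E_{j},E_{i}]$ --- so $R_{hji}^{\text{ \ \ \ \ }s}=-R_{jhi}^{\text{ \ \ }s}$ and the bracketed curvature term is exactly killed by $\widetilde{U}_{ij}^{\overline{h}}E_{\overline{h}}$, leaving precisely the last line of (\ref{BA3.7}).

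In short, there is no real obstacle beyond bookkeeping; the only substantive point is the index symmetry of $R$ that makes the curvature part of $\overline{\nabla}$ be absorbed by the torsion term (\ref{BA3.6}), which is in fact the reason (\ref{BA3.6}) was chosen. As a cross-check one could instead verify (\ref{BA3.7}) from first principles, by confirming that the connection it defines is metric, i.e. $\widetilde{\nabla}_{\alpha}(\overline{g}_{\nabla,c})_{\beta\gamma}=0$, and has torsion (\ref{BA3.6}), pairing the right-hand sides of (\ref{BA3.7}) against (\ref{BA3.1}) and using the bracket relations (\ref{BA2.3}); but the route through Hayden's formula (\ref{BA3.4}) is the efficient one, and is the one I would write out.
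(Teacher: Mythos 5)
Your proposal is correct and follows essentially the same route as the paper: the paper also obtains (\ref{BA3.7}) by adding the single nonzero deformation component $\widetilde{U}_{ij}^{\overline{h}}=p_{s}R_{jhi}^{\text{ \ \ }s}$ from Hayden's formula (\ref{BA3.4}) to the Levi-Civita connection (\ref{BA3.3}), the curvature terms cancelling by the skew-symmetry of $R$ in its first two indices. You merely make explicit the cancellation that the paper leaves implicit in the phrase ``in view of (\ref{BA3.3}) and (\ref{BA3.4})''.
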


The horizontal lift $^{H}\nabla $ of the torsion-free linear connection $%
\nabla $ on $M$ to $T^{\ast }M$ is characterized the following conditions:%
\begin{equation*}
\left\{ 
\begin{array}{c}
^{H}\nabla _{^{V}\omega }\text{ }^{V}\theta =0,\text{ }^{H}\nabla
_{^{V}\omega }\text{ }^{H}Y=0 \\ 
^{H}\nabla _{^{H}X}\text{ }^{V}\theta =\text{ }^{V}(\nabla _{X}\text{ }%
\theta ),\text{ }^{H}\nabla _{^{H}X}\text{ }^{H}Y=\text{ }^{H}(\nabla _{X}Y)%
\end{array}%
\right.
\end{equation*}%
for all vector fields $X,Y$ and covector fields $\omega ,\theta $ on $M$ (%
\cite{YanoIshihara:DiffGeo}, p. 287). In the adapted frame, the followings
satisfy (see, also \cite{Aslanci}) 
\begin{equation*}
\left\{ 
\begin{array}{c}
^{H}\nabla _{E_{\overline{i}}}E_{\overline{j}}=0,\text{ }^{H}\nabla _{E_{%
\overline{i}}}E_{j}=0, \\ 
^{H}\nabla _{E_{i}}E_{\overline{j}}=-\Gamma _{ih}^{j}E_{\overline{h}},\text{ 
}^{H}\nabla _{E_{i}}E_{j}=\Gamma _{ij}^{h}E_{h}.%
\end{array}%
\right.
\end{equation*}%
From these formulas, we can readily deduce:

\begin{proposition}
\label{propo1} Let $\nabla $ be a torsion-free linear connection on $M$ and $%
T^{\ast }M$ be the cotangent bundle with the modified Riemann extension $%
\overline{g}_{\nabla ,c}$ over $(M,\nabla )$. The metric connection ${%
\widetilde{{}\nabla }}$ on $T^{\ast }M$ of the modified Riemannian extension 
$\overline{g}_{\nabla ,c}$ coincides with the horizontal lift $^{H}\nabla $
of the torsion-free linear connection $\nabla $ on $M$ if and only if the
components $c_{ij}$ of $c$ satisfy the condition 
\begin{equation*}
{\nabla _{i}c_{jh}+\nabla _{j}c_{ih}-\nabla _{h}c_{ij}=0}.
\end{equation*}
\end{proposition}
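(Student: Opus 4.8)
The plan is to prove the equivalence by directly comparing the two connections, both already written in the adapted frame $\{E_\beta\}$. From Proposition~\ref{propo0}, the metric connection $\widetilde{\nabla}$ has the single nontrivial "extra'' term $\frac{1}{2}(\nabla_i c_{jh}+\nabla_j c_{ih}-\nabla_h c_{ij})E_{\overline{h}}$ appearing in $\widetilde{\nabla}_{E_i}E_j$, while all other components $\widetilde{\nabla}_{E_{\overline{i}}}E_{\overline{j}}$, $\widetilde{\nabla}_{E_{\overline{i}}}E_j$, and $\widetilde{\nabla}_{E_i}E_{\overline{j}}=-\Gamma_{ih}^j E_{\overline{h}}$ agree term-by-term with the horizontal lift $^{H}\nabla$ recalled just above the statement. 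So the two connections coincide if and only if their $E_{E_i}E_j$ components agree.

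First I would write out $\widetilde{\nabla}_{E_i}E_j - {}^{H}\nabla_{E_i}E_j = \frac{1}{2}(\nabla_i c_{jh}+\nabla_j c_{ih}-\nabla_h c_{ij})E_{\overline{h}}$, which follows by subtracting the last line of (\ref{BA3.7}) from $^{H}\nabla_{E_i}E_j = \Gamma_{ij}^h E_h$. Two linear connections on $T^{\ast}M$ are equal as connections precisely when all their Christoffel symbols in a common frame coincide, so $\widetilde{\nabla} = {}^{H}\nabla$ is equivalent to the vanishing of this difference for all $i,j$. Since the $E_{\overline{h}}$ are linearly independent, this is equivalent to $\nabla_i c_{jh}+\nabla_j c_{ih}-\nabla_h c_{ij}=0$ for all indices, which is exactly the claimed condition.

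For the converse direction one simply notes that if $\nabla_i c_{jh}+\nabla_j c_{ih}-\nabla_h c_{ij}=0$ then the last line of (\ref{BA3.7}) reduces to $\widetilde{\nabla}_{E_i}E_j=\Gamma_{ij}^h E_h$, and the remaining three lines of (\ref{BA3.7}) are identical to the corresponding formulas for $^{H}\nabla$; hence the connections agree on all pairs of adapted-frame vector fields and therefore on all vector fields by bilinearity and the Leibniz rule. This is essentially a bookkeeping argument, so I do not anticipate a genuine obstacle; the only point requiring a little care is making explicit that agreement of connection coefficients in the adapted (non-holonomic) frame does suffice to conclude equality of the connections, which is immediate since the adapted frame is a global frame on each $\pi^{-1}(U)$ and the connection is determined by its action on a frame.
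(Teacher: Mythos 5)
Your proof is correct and follows the same route as the paper: the paper simply juxtaposes the adapted-frame components of $^{H}\nabla$ with those of $\widetilde{\nabla}$ in (\ref{BA3.7}) and reads off that they differ only by the term $\frac{1}{2}(\nabla _{i}c_{jh}+\nabla _{j}c_{ih}-\nabla _{h}c_{ij})E_{\overline{h}}$, exactly as you do. Your added remark that equality of coefficients in the (non-holonomic) adapted frame suffices is a fine point of care but does not change the argument.
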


\subsection{Projective vector fields on the cotangent bundle with respect to
the metric connection ${\protect\widetilde{{}\protect\nabla }}$}

Given a linear connection $\nabla $ on a manifold $M$, a vector field $V$ is
said to be a projective vector field if there exists a $1-$form $\theta $
such that 
\begin{equation*}
(L_{V}\nabla )(X,Y)=\theta (X)Y+\theta (Y)X
\end{equation*}%
for any pair of vector fields $X$ and $Y$ on $M$. In particular, if $\theta
=0$, $V$ is an affine Killing vector field.

Let $\widetilde{V}$ be a vector field on $T^{\ast }M$ and $(v^{h},v^{%
\overline{h}})$ its the components with respect to the adapted frame $%
\left\{ E_{\beta }\right\} $. The components $v^{h}$ and $v^{\overline{h}}$
are said to be the horizontal components and vertical components of $%
\widetilde{V}$, respectively. As is known, a vector field is called a
fibre-preserving vector field if and only if its horizontal components
depend only on the variables $(x^{h})$. Hence, one can easily say that every
fibre-preserving vector field $\widetilde{V}$ on $T^{\ast }M$ induces a
vector field $V$ with components $(v^{h})$ on the base manifold $M$.

By straightforward calculations, we have the following.

\begin{lemma}
\label{lemma1} Let $\widetilde{V}$ be a fibre-preserving vector field on $%
T^{\ast }M$ with components $(v^{h},v^{\overline{h}})$. The Lie derivatives
of the adapted frame satisfy%
\begin{eqnarray*}
i)L_{\widetilde{V}}E_{i} &=&-\left( E_{i}v^{k}\right) E_{k}-\left(
v^{a}p_{s}R_{iak}^{\mathrm{\;\;\;}s}+E_{i}v^{\overline{k}}-v^{\overline{a}%
}\Gamma _{ik}^{\mathrm{\;\;}a}\right) E_{\overline{k}}, \\
ii)L_{\widetilde{V}}E_{\overline{i}} &=&-\left( v^{a}\Gamma _{ak}^{\mathrm{%
\;\;}i}+E_{\overline{i}}v^{\overline{k}}\right) E_{\overline{k}},
\end{eqnarray*}%
where $L_{\widetilde{V}}$ denotes the Lie derivation with respect to $%
\widetilde{V}$.
\end{lemma}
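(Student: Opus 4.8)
The plan is to compute the Lie derivative $L_{\widetilde{V}}E_\beta = [\widetilde{V}, E_\beta]$ directly in terms of the adapted frame, expanding $\widetilde{V} = v^k E_k + v^{\overline{k}} E_{\overline{k}}$ and using bilinearity of the bracket together with the structure equations (\ref{BA2.3}). Since $\widetilde{V}$ is fibre-preserving, the horizontal components $v^k$ depend only on the base coordinates $(x^h)$, so $E_{\overline{j}} v^k = \partial_{\overline{j}} v^k = 0$; this is the one place the fibre-preserving hypothesis enters, and it is what makes the formulas as clean as stated. I would record at the outset the elementary facts $E_i v = \partial_i v + p_a \Gamma^a_{hi}\partial_{\overline{h}} v$ and $E_{\overline{i}} v = \partial_{\overline{i}} v$ for a scalar function $v$ on $T^\ast M$, and note $E_{\overline{i}} v^{k} = 0$ as above.

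For part (i), I would write $L_{\widetilde{V}} E_i = -L_{E_i}\widetilde{V}$ is not quite right — rather $[\widetilde{V}, E_i] = v^k [E_k, E_i] + v^{\overline{k}}[E_{\overline{k}}, E_i] - (E_i v^k) E_k - (E_i v^{\overline{k}}) E_{\overline{k}}$, where the first two terms come from differentiating the coefficients of $\widetilde{V}$ against $E_i$ via the Leibniz rule for brackets, and the last two from differentiating the coefficients of $E_i$ (treating $E_i$ as having constant coefficient $1$ but nonconstant when re-expressed — actually the cleanest bookkeeping is $[fX, Y] = f[X,Y] - (Yf)X$, applied termwise). Substituting $[E_k, E_i] = p_s R_{ki l}^{\ \ \ s} E_{\overline{l}}$ (note the sign: $[E_k,E_i] = -[E_i,E_k] = -p_s R_{ikl}^{\ \ \ s} E_{\overline{l}}$) and $[E_{\overline{k}}, E_i] = -[E_i, E_{\overline{k}}] = \Gamma^k_{il} E_{\overline{l}}$, and relabelling summation indices, the $E_k$-component collects to $-(E_i v^k)$ and the $E_{\overline{k}}$-component collects to $-(v^a p_s R_{iak}^{\ \ \ s} + E_i v^{\overline{k}} - v^{\overline{a}} \Gamma^a_{ik})$, matching (i). For part (ii), the same procedure with $[E_k, E_{\overline{i}}] = -[E_{\overline{i}}, E_k] = \Gamma^i_{kl} E_{\overline{l}}$ (after index care) and $[E_{\overline{k}}, E_{\overline{i}}] = 0$, together with $E_i v^k$ contributing nothing to the $E_{\overline{k}}$ slot beyond the written terms and the vanishing of $E_{\overline{i}} v^k$, yields $L_{\widetilde{V}} E_{\overline{i}} = -(v^a \Gamma^i_{ak} + E_{\overline{i}} v^{\overline{k}}) E_{\overline{k}}$.

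The computation is entirely routine; there is no real obstacle, only bookkeeping. The one point demanding care is sign and index conventions in the structure equations (\ref{BA2.3}) — in particular that $R_{ijl}^{\ \ \ s}$ appears antisymmetrized in $i,j$, so that swapping the order of the bracket flips the sign — and the consistent placement of barred versus unbarred indices when relabelling dummy summation variables. I would also double-check that the term $v^k \cdot (\text{coefficient structure of } E_k)$ does not secretly produce extra $E_{\overline{l}}$ contributions through the $p_a \Gamma^a_{hi}$ piece hidden inside $E_i$; this is automatically handled once one works purely with the adapted frame and the Lie-bracket identities rather than with $\partial_i, \partial_{\overline{i}}$. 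With these conventions fixed, both identities (i) and (ii) drop out immediately.
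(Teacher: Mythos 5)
Your approach is correct and is exactly the ``straightforward calculation'' the paper alludes to (it gives no proof beyond that): expand $L_{\widetilde{V}}E_{\beta}=[\widetilde{V},E_{\beta}]$ with $\widetilde{V}=v^{k}E_{k}+v^{\overline{k}}E_{\overline{k}}$, use $[fX,Y]=f[X,Y]-(Yf)X$ together with the structure equations (\ref{BA2.3}), and invoke $E_{\overline{i}}v^{k}=0$ from the fibre-preserving hypothesis. One small slip in your part (ii): from (\ref{BA2.3}) one has $[E_{k},E_{\overline{i}}]=-\Gamma_{kl}^{\;i}E_{\overline{l}}$, not $+\Gamma_{kl}^{\;i}E_{\overline{l}}$ as you wrote; it is precisely this minus sign that produces the term $-v^{a}\Gamma_{ak}^{\;i}$ in the stated formula, so with the sign corrected your computation yields the lemma as claimed.
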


The general forms of fibre-preserving projective vector fields on $T^{\ast
}M $ with respect to the metric connection $\widetilde{\nabla }$ are given
by:

\begin{theorem}
\ \label{theo3}Let $\nabla $ be a torsion-free linear connection on $M$ and $%
T^{\ast }M$ be the cotangent bundle with the modified Riemannian extension $%
\overline{g}_{\nabla ,c}$ over $(M,\nabla )$. Then a vector field $\;%
\widetilde{V}$ is a fibre-preserving projective vector field with associated 
$1-$form $\widetilde{\theta }$ on $T^{\ast }M$ with respect to the metric
connection $\widetilde{\nabla }$ if and only if the vector field $\widetilde{%
V}$ is defined by 
\begin{equation}
\widetilde{X}={}^{H}V+^{V}B+\gamma A,  \label{BA31.1}
\end{equation}%
where the vector field $V=(v^{h}),$ the covector field $B=(B_{h})$, the $%
(1,1)-$tensor field $A=(A_{i}^{h})$ and the associated $1-$form $\widetilde{%
\theta }$ satisfy

$(i)$ $\widetilde{\theta }=\theta _{i}dx^{i},$ $\ $

$(ii)$ $\nabla _{j}A_{k}^{i}=\theta _{j}\delta _{i}^{k}-v^{a}R_{jak}^{%
\mathrm{\;\;\;}i},$

$(iii)$ $L_{V}\Gamma _{ij}^{h}=\theta _{i}\delta _{j}^{h}+\theta _{j}\delta
_{i}^{h}$

$(iv)$ $\nabla _{i}\nabla _{j}B_{l}+R_{lji}^{\text{ \ \ }a}B_{a}+\frac{1}{2}%
v^{a}\nabla _{a}M_{ijl}+\frac{1}{2}(\nabla _{j}v^{a})M_{ial}$

$+\frac{1}{2}(\nabla _{i}v^{a})M_{ajl}-A_{l}^{a}M_{ija}=0$ \ $(M_{ijl}:={%
\nabla _{i}c_{jl}+\nabla _{j}c_{il}-\nabla _{l}c_{ij})}$

$(v)$ $\nabla _{i}\nabla _{j}A_{l}^{s}+R_{lji}^{\text{ \ \ }%
a}A_{a}^{s}-R_{aji}^{\text{ \ \ }s}A_{l}^{a}+v^{a}\nabla _{i}R_{jal}^{\text{
\ \ }s}+(\nabla _{i}v^{a})R_{jal}^{\text{ \ \ }s}=0.$
\end{theorem}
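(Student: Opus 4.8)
The plan is to prove both implications by a direct computation in the adapted frame $\{E_\beta\}$, based on the formula
\begin{equation*}
(L_{\widetilde V}\widetilde\nabla)(Z,W)=L_{\widetilde V}\bigl(\widetilde\nabla_Z W\bigr)-\widetilde\nabla_{[\widetilde V,Z]}W-\widetilde\nabla_{Z}[\widetilde V,W]
\end{equation*}
for the $(1,2)$-tensor $L_{\widetilde V}\widetilde\nabla$. I start from an arbitrary fibre-preserving vector field $\widetilde V=v^iE_i+v^{\overline i}E_{\overline i}$, so $v^i=v^i(x^h)$ depends only on the base coordinates while $v^{\overline i}$ is a priori an arbitrary smooth function on $\pi^{-1}(U)$. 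I would evaluate $(L_{\widetilde V}\widetilde\nabla)(E_\beta,E_\gamma)$ on the four types of pairs $(E_{\overline i},E_{\overline j})$, $(E_i,E_{\overline j})$, $(E_{\overline i},E_j)$ and $(E_i,E_j)$, inserting the connection coefficients of Proposition~\ref{propo0}, the Lie-derivative formulas of Lemma~\ref{lemma1}, and the bracket relations \eqref{BA2.3}; then impose the projective identity $(L_{\widetilde V}\widetilde\nabla)(E_\beta,E_\gamma)=\widetilde\theta(E_\beta)E_\gamma+\widetilde\theta(E_\gamma)E_\beta$ and split each equation into its horizontal and vertical parts and, in the vertical slot, into homogeneous components in the fibre coordinates $p_i$.

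Since $\widetilde\nabla$ annihilates all vertical--vertical and vertical--horizontal pairs, the $(E_{\overline i},E_{\overline j})$ equation reduces to $\partial_{\overline i}\partial_{\overline j}v^{\overline k}=\widetilde\theta_{\overline i}\delta^{k}_{j}+\widetilde\theta_{\overline j}\delta^{k}_{i}$. Differentiating once more in a fibre direction and using the total symmetry of $\partial_{\overline l}\partial_{\overline i}\partial_{\overline j}v^{\overline k}$ shows that the second fibre-derivatives of $v^{\overline k}$ are independent of $p$, so $v^{\overline k}$ is at most quadratic in the $p_i$; comparing with the $p$-quadratic part of the mixed $(E_i,E_{\overline j})$ equation, which must vanish, forces $\widetilde\theta_{\overline i}=0$ for all $i$ --- assertion $(i)$ --- and eliminates the quadratic part, leaving $v^{\overline k}=B_k(x)+p_aA^{\,a}_k(x)$. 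The coordinate transformation law of the adapted frame then identifies $B=(B_k)$ as a $1$-form and $A=(A^{\,a}_k)$ as a $(1,1)$-tensor on $M$, so $\widetilde V$ has the form \eqref{BA31.1}. With this substitution, the residual content of the $(E_i,E_{\overline j})$ equation is exactly $(ii)$; the horizontal part of the $(E_i,E_j)$ equation becomes, in base indices, $L_V\Gamma^{h}_{ij}=\theta_i\delta^{h}_{j}+\theta_j\delta^{h}_{i}$, which is $(iii)$ (so the induced $V$ is itself projective on $(M,\nabla)$); and the vertical part of the $(E_i,E_j)$ equation is affine in $p$, its $p$-independent component yielding $(iv)$ --- this is where $B$, the coefficients $\Gamma$, the curvature $R$, and the tensor $M_{ijl}=\nabla_ic_{jl}+\nabla_jc_{il}-\nabla_lc_{ij}$ that occurs in $\widetilde\nabla_{E_i}E_j$ all enter --- and its $p$-linear component yielding $(v)$. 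For the last point one also invokes $(L_{\widetilde V}\widetilde T)(E_i,E_j)=0$, which holds automatically because the projective identity is symmetric in $E_i,E_j$ whereas $\widetilde T$ is not, and which supplies the curvature terms occurring in $(v)$. The converse is the easy direction: assuming \eqref{BA31.1} together with $(i)$--$(v)$, one reverses the above manipulations to verify the projective identity on each pair of adapted frame fields, and hence everywhere by $C^\infty(T^\ast M)$-bilinearity.

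I expect the main obstacle to be organizational rather than conceptual. Once the nonholonomy of $\{E_\beta\}$ and the $p$-dependence of $v^{\overline i}$ are taken into account, each of $L_{\widetilde V}(\widetilde\nabla_Z W)$, $\widetilde\nabla_{[\widetilde V,Z]}W$ and $\widetilde\nabla_{Z}[\widetilde V,W]$ expands into a large number of terms, and the whole identity must be regrouped so that the $\Gamma$- and $R$-terms assemble into covariant derivatives and curvature components. In particular the second covariant derivatives $\nabla_i\nabla_jB_l$ and $\nabla_i\nabla_jA^{s}_{l}$ in $(iv)$ and $(v)$, together with the accompanying curvature corrections, arise from $\widetilde\nabla_{E_i}$ acting on the vertical part of $[\widetilde V,E_j]$, which already carries one base derivative of $B$ and $A$; keeping track of these, and of the signs and index placements of the $\Gamma$-contributions, is the delicate part of the computation.
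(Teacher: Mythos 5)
Your plan coincides with the paper's proof: the authors likewise expand $(L_{\widetilde V}\widetilde\nabla)(E_\beta ,E_\gamma )$ in the adapted frame using Lemma \ref{lemma1} and Proposition \ref{propo0}, obtain $\widetilde\theta _{\overline i}=0$ and $v^{\overline k}=B_k+p_sA_k^{\,s}$ from the vertical--vertical and mixed equations, read off $(ii)$--$(v)$ from the remaining horizontal and vertical ($p$-constant and $p$-linear) components of the $(E_i,E_j)$ equation, and prove the converse by reversing the steps. The only cosmetic difference is that $\widetilde\theta _{\overline i}=0$ drops out directly from the horizontal component of the mixed equation (whose left-hand side is purely vertical), so neither the $p$-quadratic bookkeeping nor the appeal to $L_{\widetilde V}\widetilde T=0$ is actually needed.
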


\begin{proof}
\noindent A fibre-preserving vector field $\widetilde{V}=v^{h}E_{h}+v^{%
\overline{h}}E_{\overline{h}}$ on $T^{\ast }M$ is a fibre-preserving
projective vector field if and only if there exist a $1-$form $\widetilde{%
\theta }$ with components $(\widetilde{\theta }_{i},\widetilde{\theta }_{%
\overline{i}})$ on $T^{\ast }M$ such that 
\begin{eqnarray}
(L_{\widetilde{V}}^{\text{ \ \ }}\widetilde{\nabla })(\widetilde{Y},%
\widetilde{Z}) &=&L_{\widetilde{V}}({}\widetilde{\nabla }_{\widetilde{Y}}%
\widetilde{Z})-{}\widetilde{\nabla }_{\widetilde{Y}}(L_{\widetilde{V}}%
\widetilde{Z})-{}\widetilde{\nabla }_{(L_{\widetilde{V}}\widetilde{Y})}%
\widetilde{Z}  \label{BA31.2} \\
&=&\widetilde{\theta }(\widetilde{Y})\widetilde{Z}+\widetilde{\theta }(%
\widetilde{Z})\widetilde{Y}  \notag
\end{eqnarray}%
for any vector fields $\tilde{Y}$ and $\widetilde{Z}$ on $T^{\ast }M$.

Putting $\widetilde{Y}=E_{\overline{i}},\widetilde{Z}=E_{\bar{j}}$ in (\ref%
{BA31.2}), we get%
\begin{equation}
E_{\overline{i}}\left( E_{\bar{j}}v^{\bar{k}}\right) E_{\bar{k}}=\theta _{%
\overline{i}}E_{\bar{j}}+\theta _{\bar{j}}E_{\overline{i}}.  \label{BA31.3}
\end{equation}

Putting $Y=E_{\overline{i}},Z=E_{j}$ in (\ref{BA31.2}), we find 
\begin{equation}
\theta _{\overline{i}}=0  \label{BA31.4}
\end{equation}%
and%
\begin{equation}
v^{a}R_{jak}^{\mathrm{\;\;\;}i}+E_{\overline{i}}\left( E_{j}v^{\bar{k}%
}\right) -\left( E_{\overline{i}}v^{\bar{a}}\right) \Gamma _{jk}^{\mathrm{%
\;\;}a}=\theta _{j}\delta _{i}^{k}.  \label{BA31.5}
\end{equation}

In view of (\ref{BA31.4}), (\ref{BA31.3}) reduces to

\begin{equation*}
E_{\overline{i}}\left( E_{\bar{j}}v^{\bar{k}}\right) E_{\bar{k}}=0
\end{equation*}%
from which it follows that%
\begin{equation}
v^{\bar{k}}=p_{s}A_{k}^{s}+B_{k}  \label{BA31.6}
\end{equation}%
where $A_{k}^{s}$ and $B_{k}$ are certain functions which depend only on the
variables $(x^{h})$. The coordinate transformation rule implies that $A$ is
a $(1,1)-$tensor field with components $(A_{k}^{s})$ and $B$ is a covector
field with components $(B_{k})$. Hence, the fibre-preserving projective
vector field $\widetilde{V}$ on $T^{\ast }M$ can be written in the form:%
\begin{eqnarray*}
\widetilde{V} &=&v^{k}E_{k}+v^{\overline{k}}E_{\overline{k}%
}=v^{k}E_{k}+\{p_{s}A_{k}^{s}+B_{k}\}E_{\overline{k}} \\
&=&{}^{H}V+^{V}B+\gamma A
\end{eqnarray*}%
where $\gamma $ is an operator applied to the $(1,1)-$tensor field $A$ and
expressed locally $\gamma A=(p_{s}A_{k}^{s})E_{\overline{k}}$ (for details
related to the operator $\gamma $, see \cite{YanoIshihara:DiffGeo}, p.$12-13$%
).

Substitution (\ref{BA31.6}) into (\ref{BA31.5}) gives 
\begin{equation}
v^{a}R_{jak}^{\mathrm{\;\;\;}i}+\nabla _{j}A_{k}^{i}=\theta _{j}\delta
_{i}^{k}.  \label{BA31.7}
\end{equation}%
Contracting $i$ and $k$ in (\ref{BA31.7}), we have%
\begin{equation*}
\theta _{j}=\frac{1}{n}\nabla _{j}A_{k}^{k}.
\end{equation*}

Finally, putting $Y=E_{i},Z=E_{j}$ in (\ref{BA31.2}), we obtain

\begin{equation*}
L_{V}\Gamma _{ij}^{h}=\theta _{i}\delta _{j}^{h}+\theta _{j}\delta _{i}^{h},
\end{equation*}%
\begin{eqnarray*}
&&\nabla _{i}\nabla _{j}B_{l}-R_{ijl}^{\text{ \ \ }a}B_{a}+\frac{1}{2}%
v^{a}\nabla _{a}M_{ijl} \\
&&+\frac{1}{2}(\nabla _{j}v^{a})M_{ial}+\frac{1}{2}(\nabla
_{i}v^{a})M_{ajl}-A_{l}^{a}M_{ija} \\
&=&0
\end{eqnarray*}%
and%
\begin{equation*}
\nabla _{i}\nabla _{j}A_{l}^{s}+R_{lji}^{\text{ \ \ }a}A_{a}^{s}-R_{aji}^{%
\text{ \ \ }s}A_{l}^{a}+v^{a}\nabla _{i}R_{jal}^{\text{ \ \ }s}+(\nabla
_{i}v^{a})R_{jal}^{\text{ \ \ }s}=0,
\end{equation*}%
where $M_{ijl}={\nabla _{i}c_{jl}+\nabla _{j}c_{il}-\nabla _{l}c_{ij}}$.

Conversely, if $B_{h},$ $v^{h},\theta _{h}$ and $A_{i}^{h}$ are given so
that they satisfy $(i)-(v)$, reversing the above steps, we see that $\;%
\widetilde{V}={}^{H}V+^{V}B+\gamma A$ is a fibre-preserving projective
vector field on $T^{\ast }M$ with respect to the metric connection $%
\widetilde{\nabla }$. This completes the proof.
\end{proof}

The below result follows immediately from Theorem \ref{theo3} and its Proof.

\begin{corollary}
Let $\nabla $ be a torsion-free linear connection on $M$ and $T^{\ast }M$ be
the cotangent bundle with the modified Riemannian extension $\overline{g}%
_{\nabla ,c}$ over $(M,\nabla )$. Every fibre-preserving projective vector
field $\widetilde{V}$ with respect to the metric connection $\widetilde{%
\nabla }$ is of the form (\ref{BA31.1}) and it naturally induces a
projective vector field $V$ on $M$.
\end{corollary}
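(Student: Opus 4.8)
The plan is to observe that the corollary is essentially a repackaging of Theorem \ref{theo3}, so I would present it as a direct consequence. First I would recall that by Theorem \ref{theo3}, any fibre-preserving projective vector field $\widetilde{V}$ on $(T^{\ast}M,\overline{g}_{\nabla,c})$ with respect to $\widetilde{\nabla}$ must have the form $\widetilde{V}={}^{H}V+{}^{V}B+\gamma A$ given in \eqref{BA31.1}, where $V$, $B$, $A$ and the associated $1$-form $\widetilde{\theta}=\theta_i dx^i$ satisfy conditions $(i)$--$(v)$. This already gives the first assertion of the corollary verbatim, so the only thing left to verify is that the induced object $V=(v^h)$ on the base manifold $M$ is itself a projective vector field on $(M,\nabla)$.

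For that, I would isolate condition $(iii)$ of Theorem \ref{theo3}, namely $L_{V}\Gamma_{ij}^{h}=\theta_i\delta_j^h+\theta_j\delta_i^h$. Here I would recall that the Lie derivative of a torsion-free linear connection has the coordinate expression $(L_{V}\nabla)(\partial_i,\partial_j)=(L_{V}\Gamma_{ij}^{h})\partial_h$, so that $(iii)$ is precisely the statement $(L_{V}\nabla)(X,Y)=\theta(X)Y+\theta(Y)X$ for $X=\partial_i$, $Y=\partial_j$, with $\theta=\theta_i dx^i$ the $1$-form on $M$ whose components appear in $(i)$. By tensoriality of $L_{V}\nabla$ this extends to all vector fields $X,Y$ on $M$, which is exactly the defining condition for $V$ to be a projective vector field on $M$ with associated $1$-form $\theta$. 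I would also note in passing that $\theta$ is well-defined as a $1$-form on $M$ because condition $(i)$ forces $\widetilde{\theta}$ to be the pullback $\pi^{\ast}\theta$, and because contracting $(ii)$ gives the intrinsic formula $\theta_j=\tfrac{1}{n}\nabla_j A_k^k$.

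There is essentially no obstacle here: the content of the corollary is contained in Theorem \ref{theo3} and the remark that its condition $(iii)$ is the coordinate form of the projective condition on the base. The only minor point to be careful about is that "fibre-preserving" is what allows one to speak of the induced vector field $V$ on $M$ at all — the horizontal components $v^h$ depend only on the base coordinates $(x^h)$, as was already pointed out before Lemma \ref{lemma1} — so I would mention this to justify that $V=(v^h)$ is a genuine vector field on $M$ rather than merely a fibrewise family. Thus the proof reduces to: cite the form \eqref{BA31.1} from Theorem \ref{theo3}, identify the induced base vector field $V$ using the fibre-preserving hypothesis, and read off condition $(iii)$ as the statement that $V$ is projective on $M$.
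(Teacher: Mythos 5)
Your proposal is correct and matches the paper's intent: the paper derives this corollary directly from Theorem \ref{theo3} (the form \eqref{BA31.1} plus conditions $(i)$--$(v)$), with condition $(iii)$, $L_{V}\Gamma_{ij}^{h}=\theta_{i}\delta_{j}^{h}+\theta_{j}\delta_{i}^{h}$, read as the coordinate form of the projective condition for the induced base field $V$, exactly as you argue. Your added remarks on the fibre-preserving hypothesis and on $\theta_{j}=\tfrac{1}{n}\nabla_{j}A_{k}^{k}$ are consistent with the proof of the theorem and require no change.
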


\subsection{Semi-Symmetry properties of the cotangent bundle with respect to
the metric connection $\protect\widetilde{\protect\nabla }$}

Given a manifold $M$ $(\dim (M)\geq 3)$ endowed with a linear connection $%
\nabla $ whose curvature tensor is marked as $R$, for any tensor field of $S$
of type $(0,k),k\geq 1,$ the tensor field $R(X,Y).S$ is expressed in the
form:%
\begin{eqnarray*}
(R(X,Y).S)(X_{1},X_{2},...,X_{k}) &=&-S(R(X,Y)X_{1},X_{2},...,X_{k}) \\
&&-...-S(X_{1},X_{2},...,X_{k-1},R(X,Y)X_{k})
\end{eqnarray*}%
for any vector fields $X_{1},X_{2},...,X_{k},X,Y$ on $M$, where $R(X,Y)$
acts as a derivation on $S$. If $R(X,Y).S=0$, then the manifold $M$ is said
to be $S$ semi-symmetric with respect to the linear connection $\nabla $. A
(pseudo-) Riemannian manifold $(M,g)$ such that its curvature tensor $R$
satisfies the condition 
\begin{equation*}
R(X,Y).R=0
\end{equation*}%
is called a semi-symmetric space. Also, note that locally symmetric spaces
are semi-symmetric, but in general the converse is not true. The
semi-symmetric space was first studied by Cartan. Nevertheless, Sinjukov
first used the name "semi-symmetric spaces" for manifolds satisfying the
above curvature condition \cite{Sinjukov}. Later, Szabo gave the full local
and global classification of semi-symmetric spaces \cite{Szabo1,Szabo2}. A
(pseudo-)Riemannian manifold $(M,g)$ is called Ricci semi-symmetric if the
following condition is satisfied:%
\begin{equation*}
R(X,Y).Ric=0,
\end{equation*}%
where $Ric$ is the Ricci tensor of $(M,g)$. It is obvious that any
semi-symmetric manifold is Ricci semi-symmetric.

The curvature tensor $\widetilde{R}$ of the metric connection $\widetilde{%
\nabla }$ on $T^{\ast }M$ is obtained from the formula%
\begin{equation*}
\widetilde{R}(E_{\alpha },E_{\beta })E_{\gamma }=\widetilde{\nabla }%
_{E_{\alpha }}\widetilde{\nabla }_{E_{\beta }}E_{\gamma }-\widetilde{\nabla }%
_{E_{\beta }}\widetilde{\nabla }_{E_{\alpha }}E_{\gamma }-\widetilde{\nabla }%
_{\left[ E_{\alpha },E_{\beta }\right] }E_{\gamma }
\end{equation*}%
with respect to the adapted frame. For the curvature tensor $\widetilde{R}$
of the metric connection $\widetilde{\nabla }$, with the help of (\ref{BA2.3}%
) and (\ref{BA3.7}), we have:

\begin{proposition}
\label{propo3} Let $\nabla $ be a torsion-free linear connection on $M$ and $%
T^{\ast }M$ be the cotangent bundle with the modified Riemann extension $%
\overline{g}_{\nabla ,c}$ over $(M,\nabla )$. The curvature tensor ${%
\widetilde{{}R}}$ of the metric connection ${\widetilde{{}\nabla }}$ on $%
T^{\ast }M$ satisfies the following conditions:%
\begin{eqnarray*}
\widetilde{R}(E_{i},E_{j})E_{k} &=&R_{ijk}^{\text{ \ \ \ }h}E_{h}{\,} \\
&&+\frac{1}{2}\{\nabla _{i}(\nabla _{k}c_{jh}-\nabla _{h}c_{jk})-\nabla
_{j}(\nabla _{k}c_{ih}-\nabla _{h}c_{ik}) \\
&&-R_{ijk}^{\text{ \ \ \ }m}c_{mh}-R_{ijh}^{\text{ \ \ \ }m}c_{km}\}E_{%
\overline{h}} \\
\widetilde{R}(E_{i},E_{j})E_{\overline{k}} &=&R_{jih}^{\text{ \ \ \ }k}E_{%
\overline{h}},{\,} \\
\widetilde{R}(E_{i},E_{\overline{j}})E_{k} &=&{\,0,}\text{ }\widetilde{R}%
(E_{i},E_{\overline{j}})E_{\overline{k}}={\,0,}\text{ }\widetilde{R}(E_{%
\overline{i}},E_{j})E_{k}=0, \\
\widetilde{R}(E_{\overline{i}},E_{j})E_{\overline{k}} &=&0,\text{ }%
\widetilde{R}(E_{\overline{i}},E_{\overline{j}})E_{k}=0,\text{ }\widetilde{R}%
(E_{\overline{i}},E_{\overline{j}})E_{\overline{k}}=0
\end{eqnarray*}%
with respect to the adapted frame $\{E_{\beta }\}$.
\end{proposition}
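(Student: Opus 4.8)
The plan is to compute $\widetilde{R}(E_\alpha, E_\beta)E_\gamma$ directly from its definition
\[
\widetilde{R}(E_\alpha,E_\beta)E_\gamma = \widetilde{\nabla}_{E_\alpha}\widetilde{\nabla}_{E_\beta}E_\gamma - \widetilde{\nabla}_{E_\beta}\widetilde{\nabla}_{E_\alpha}E_\gamma - \widetilde{\nabla}_{[E_\alpha,E_\beta]}E_\gamma,
\]
feeding in the covariant derivatives of the adapted frame recorded in Proposition \ref{propo0}, equation (\ref{BA3.7}), and the Lie brackets (\ref{BA2.3}). Since $\widetilde{\nabla}$ kills anything whose first slot is a vertical $E_{\overline{i}}$ and sends horizontal pairs and mixed pairs to explicit combinations, most of the eight cases collapse quickly: whenever $E_\alpha$ or $E_\beta$ is vertical, or $E_\gamma$ is vertical and one of $E_\alpha,E_\beta$ is vertical, the result is forced to vanish or to a purely horizontal-curvature term. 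First I would dispose of all cases involving at least one barred index in the first two slots, using that $\widetilde{\nabla}_{E_{\overline{i}}}(\cdot)=0$ and $[E_{\overline{i}},E_{\overline{j}}]=0$, $[E_i,E_{\overline{j}}]=-\Gamma_{il}^{j}E_{\overline{l}}$; this yields the six vanishing identities and the single surviving mixed term $\widetilde{R}(E_i,E_j)E_{\overline{k}}$.

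For $\widetilde{R}(E_i,E_j)E_{\overline{k}}$, I would expand $\widetilde{\nabla}_{E_i}\widetilde{\nabla}_{E_j}E_{\overline{k}} = \widetilde{\nabla}_{E_i}(-\Gamma_{jh}^{k}E_{\overline{h}}) = -(\partial_i\Gamma_{jh}^{k})E_{\overline{h}} - \Gamma_{jh}^{k}\widetilde{\nabla}_{E_i}E_{\overline{h}}$, antisymmetrize in $i,j$, subtract the bracket term $\widetilde{\nabla}_{[E_i,E_j]}E_{\overline{k}} = \widetilde{\nabla}_{p_s R_{ijl}^{\ \ \ s}E_{\overline{l}}}E_{\overline{k}} = 0$, and recognize the resulting coefficient of $E_{\overline{h}}$ as $-R_{ijh}^{\ \ \ k}=R_{jih}^{\ \ \ k}$; here one must be careful that $E_i$ has the horizontal-lift form, so $\partial_i$ differentiating a function of $x$ only is just $E_i$ on such functions, and the curvature-sign conventions from (\ref{BA2.3}) are being matched. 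The main computational case is $\widetilde{R}(E_i,E_j)E_k$: here $\widetilde{\nabla}_{E_j}E_k = \Gamma_{jk}^{h}E_h + \tfrac12 M_{jkh}E_{\overline{h}}$ with $M_{jkh}=\nabla_j c_{kh}+\nabla_k c_{jh}-\nabla_h c_{jk}$, so one applies $\widetilde{\nabla}_{E_i}$ again, getting an $E_h$-part that reproduces the base curvature $R_{ijk}^{\ \ \ h}E_h$ exactly as in the $^H\nabla$ computation, and an $E_{\overline{h}}$-part assembled from $\partial_i(\tfrac12 M_{jkh})$, the $-\Gamma_{ih}^{\ell}$ coming from $\widetilde{\nabla}_{E_i}E_{\overline{\ell}}$ acting on the $\Gamma_{jk}^{\ell}E_{\overline{\ell}}$ and $\tfrac12 M_{jk\ell}E_{\overline{\ell}}$ pieces, and the $\tfrac12 M_{i h m}$-type terms from $\widetilde{\nabla}_{E_i}(\Gamma_{jk}^{m}E_m)$.

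The delicate part will be organizing the $E_{\overline{h}}$-coefficient of $\widetilde{R}(E_i,E_j)E_k$ into the claimed form $\tfrac12\{\nabla_i(\nabla_k c_{jh}-\nabla_h c_{jk}) - \nabla_j(\nabla_k c_{ih}-\nabla_h c_{ik}) - R_{ijk}^{\ \ \ m}c_{mh} - R_{ijh}^{\ \ \ m}c_{km}\}$. Concretely, I expect to first rewrite all the raw $\partial$-derivatives and Christoffel contractions as covariant derivatives $\nabla_i M_{jkh}$, then use the Ricci/second-Bianchi-type identity $\nabla_i\nabla_j c_{kh}-\nabla_j\nabla_i c_{kh} = -R_{ijk}^{\ \ \ m}c_{mh}-R_{ijh}^{\ \ \ m}c_{km}$ to convert the antisymmetrized double-covariant-derivative of $c$ into the curvature terms, while the terms symmetric in a way that cancels under $i\leftrightarrow j$ antisymmetrization drop out. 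The bookkeeping of which of the nine $\nabla\nabla c$ monomials survive and with what sign is the one place an error is easy to make; everything else is mechanical substitution. Once the $E_h$- and $E_{\overline{h}}$-parts are in hand for this case and the mixed case, and the six trivial vanishings are noted, the proposition follows.
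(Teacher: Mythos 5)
Your proposal is correct and is essentially the paper's own argument: the paper obtains Proposition \ref{propo3} by exactly this direct computation of $\widetilde{R}$ from its defining formula, substituting the connection components (\ref{BA3.7}) and the brackets (\ref{BA2.3}), with the vertical coefficient reduced via covariantization to $\tfrac12(\nabla_i M_{jkh}-\nabla_j M_{ikh})$ and the Ricci identity for the $(0,2)$-tensor $c$. The identities you flag as the delicate steps are the right ones and carry through with the stated signs.
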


Let $\widetilde{X}$ and $\widetilde{Y}$ be vector fields of $T^{\ast }M$.
The curvature operator $\widetilde{R}(\widetilde{X},\widetilde{Y})$ is a
differential operator on $T^{\ast }M$. Similarly, for vector fields $X$ and $%
Y$ of $M$, $R(X,Y)$ is a differential operator on $M$. Now, we operate the
curvature operator $\widetilde{R}(\widetilde{X},\widetilde{Y})$ to the
curvature tensor $\widetilde{R}$. That is, for all $\widetilde{Z},\widetilde{%
W}$ and $\widetilde{U}$, we consider the condition $(\widetilde{R}(%
\widetilde{X},\widetilde{Y})\widetilde{R})(\widetilde{Z},\widetilde{W})%
\widetilde{U}=0$. In the case, we shall call the cotangent bundle $T^{\ast
}M $ as semi-symmetric with respect to the metric connection $\widetilde{%
\nabla }$.

In the adapted frame $\left\{ E_{\beta }\right\} $, the tensor $(\widetilde{R%
}(\widetilde{X},\widetilde{Y})\widetilde{R})(\widetilde{Z},\widetilde{W})%
\widetilde{U}$ is locally expressed as follows:%
\begin{eqnarray}
&&((\widetilde{R}(\widetilde{X},\widetilde{Y})\widetilde{R})(\widetilde{Z},%
\widetilde{W})\widetilde{U})_{\alpha \beta \gamma \theta \sigma }^{\text{ \
\ \ \ \ \ \ \ }\varepsilon }  \label{BA32.1} \\
&=&\widetilde{R}_{\alpha \beta \tau }^{\text{ \ \ \ }\varepsilon }\widetilde{%
R}_{\gamma \theta \sigma }^{\text{ \ \ \ }\tau }-\widetilde{R}_{\alpha \beta
\gamma }^{\text{ \ \ \ }\tau }\widetilde{R}_{\tau \theta \sigma }^{\text{ \
\ \ }\varepsilon }-\widetilde{R}_{\alpha \beta \theta }^{\text{ \ \ \ }\tau }%
\widetilde{R}_{\gamma \tau \sigma }^{\text{ \ \ \ }\varepsilon }-\widetilde{R%
}_{\alpha \beta \sigma }^{\text{ \ \ \ }\tau }\widetilde{R}_{\gamma \theta
\tau }^{\text{ \ \ \ }\varepsilon }.  \notag
\end{eqnarray}
Similarly, in local coordinates,%
\begin{eqnarray*}
&&((R(X,Y)R)(Z,W)U)_{ijklm}^{\text{ \ \ \ \ \ \ \ }n} \\
&=&R_{ijp}^{\text{ \ \ \ }n}R_{klm}^{\text{ \ \ \ \ }p}-R_{ijk}^{\text{ \ \
\ }p}R_{plm}^{\text{ \ \ \ \ }n}-R_{ijl}^{\text{ \ \ \ }p}R_{kpm}^{\text{ \
\ \ \ }n}-R_{ijm}^{\text{ \ \ \ }p}R_{klp}^{\text{ \ \ \ \ }n}.
\end{eqnarray*}

\begin{theorem}
\label{theore1}Let $\nabla $ be a torsion-free linear connection on $M$ and $%
T^{\ast }M$ be the cotangent bundle with the modified Riemannian extension $%
\overline{g}_{\nabla ,c}$ over $(M,\nabla )$. Under the assumption that $%
\nabla _{i}(\nabla _{k}c_{jh}-\nabla _{h}c_{jk})-\nabla _{j}(\nabla
_{k}c_{ih}-\nabla _{h}c_{ik})-R_{ijk}^{\text{ \ \ \ }m}c_{mh}-R_{ijh}^{\text{
\ \ \ }m}c_{km}=0$, where $R$ is the curvature tensor of $\nabla $, the
cotangent bundle $T^{\ast }M$ is semi-symmetric with respect to the metric
connection $\widetilde{\nabla }$ if and only if the base manifold $M$ is
semi-symmetric with respect to $\nabla $.
\end{theorem}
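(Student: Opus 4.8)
The plan is to exploit the block structure of the curvature tensor $\widetilde{R}$ given in Proposition \ref{propo3}, together with the simplifying hypothesis on $c$, and reduce the vanishing of $(\widetilde{R}(\widetilde{X},\widetilde{Y})\widetilde{R})(\widetilde{Z},\widetilde{W})\widetilde{U}$ to the single condition $R(X,Y)\cdot R = 0$ on $M$. First I would observe that under the assumption that $\nabla_i(\nabla_k c_{jh}-\nabla_h c_{jk})-\nabla_j(\nabla_k c_{ih}-\nabla_h c_{ik})-R_{ijk}^{\;\;\;m}c_{mh}-R_{ijh}^{\;\;\;m}c_{km}=0$, the formula for $\widetilde{R}(E_i,E_j)E_k$ collapses to just the horizontal part $R_{ijk}^{\;\;\;h}E_h$, so that the only nonzero components of $\widetilde{R}$ with respect to the adapted frame are
\begin{equation*}
\widetilde{R}(E_i,E_j)E_k = R_{ijk}^{\;\;\;h}E_h,\qquad \widetilde{R}(E_i,E_j)E_{\overline{k}} = R_{jih}^{\;\;\;k}E_{\overline{h}}.
\end{equation*}
In index notation, the only nonzero components are $\widetilde{R}_{ijk}^{\;\;\;h}=R_{ijk}^{\;\;\;h}$ and $\widetilde{R}_{ij\overline{k}}^{\;\;\;\overline{h}}=-R_{ijh}^{\;\;\;k}$ (and those obtained by the symmetry $\widetilde{R}_{\alpha\beta\gamma}^{\;\;\;\varepsilon}=-\widetilde{R}_{\beta\alpha\gamma}^{\;\;\;\varepsilon}$).

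Next I would plug these components into the local expression (\ref{BA32.1}) for $((\widetilde{R}(\widetilde{X},\widetilde{Y})\widetilde{R})(\widetilde{Z},\widetilde{W})\widetilde{U})_{\alpha\beta\gamma\theta\sigma}^{\;\;\;\;\;\;\;\varepsilon}$ and examine each choice of the free indices $\alpha,\beta,\gamma,\theta,\sigma,\varepsilon$ ranging over barred and unbarred values. Since $\widetilde{R}$ has no mixed components (it never sends a purely horizontal triple to something with a vertical index except in the ``all unbarred'' slot, and it sends $E_{\overline{k}}$ only to $E_{\overline{h}}$), the only surviving cases are: (a) all six indices unbarred, which reproduces exactly $((R(X,Y)R)(Z,W)U)_{ijklm}^{\;\;\;\;\;\;n} = R_{ijp}^{\;\;\;n}R_{klm}^{\;\;\;p}-R_{ijk}^{\;\;\;p}R_{plm}^{\;\;\;n}-R_{ijl}^{\;\;\;p}R_{kpm}^{\;\;\;n}-R_{ijm}^{\;\;\;p}R_{klp}^{\;\;\;n}$; and (b) the case $\alpha=i,\beta=j,\gamma=k,\theta=l,\sigma=\overline{m},\varepsilon=\overline{n}$ (and its relabelings), which after substituting $\widetilde{R}_{ij\overline{k}}^{\;\;\;\overline{h}}=-R_{ijh}^{\;\;\;k}$ will be a linear combination of contractions of $R$ against $R$ that is (up to sign and index relabelling) again the statement $R(X,Y)\cdot R=0$ — in fact it is the ``(0,2)-version'' $R(X,Y)\cdot R$ acting with $R$ interpreted through the $\overline{k},\overline{n}$ slots. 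I would verify this reduces to the same tensorial identity on $M$, so that the full system $(\widetilde{R}(\widetilde{X},\widetilde{Y})\widetilde{R})(\widetilde{Z},\widetilde{W})\widetilde{U}=0$ is equivalent to $R(X,Y)\cdot R=0$.

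Having done this, the equivalence follows: if $M$ is semi-symmetric with respect to $\nabla$, then $(R(X,Y)R)(Z,W)U=0$ in all unbarred slots, and by the case analysis all components of $(\widetilde{R}(\widetilde{X},\widetilde{Y})\widetilde{R})(\widetilde{Z},\widetilde{W})\widetilde{U}$ vanish, so $T^{\ast}M$ is semi-symmetric with respect to $\widetilde{\nabla}$; conversely, restricting the vanishing of the $T^{\ast}M$-tensor to the all-unbarred components immediately gives $R(X,Y)\cdot R = 0$ on $M$, i.e.\ $M$ is semi-symmetric. The main obstacle I anticipate is purely bookkeeping: correctly enumerating which index-placements give nonzero contributions in (\ref{BA32.1}) — one has to be careful that the contracted dummy index $\tau$ is forced to be unbarred or barred depending on the external indices, and that the term $\widetilde{R}_{\alpha\beta\tau}^{\;\;\;\varepsilon}\widetilde{R}_{\gamma\theta\sigma}^{\;\;\;\tau}$ with $\varepsilon$ barred forces $\tau$ barred which then forces $\gamma,\theta,\sigma$ into a configuration producing a vertical output — and then checking that the resulting identity in case (b) is genuinely the same as $R(X,Y)\cdot R=0$ rather than some a priori weaker trace of it. Once the block-diagonal structure of $\widetilde{R}$ is in hand, however, this is a finite check and no genuinely new computation with the $c$-terms is needed, since those have already been killed by the standing hypothesis.
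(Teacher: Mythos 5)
Your proposal is correct and takes essentially the same route as the paper: it uses the block structure of $\widetilde{R}$ from Proposition \ref{propo3}, with the hypothesis killing the $c$-dependent vertical part of $\widetilde{R}(E_i,E_j)E_k$, and then reduces the adapted-frame components of $\widetilde{R}\cdot\widetilde{R}$ to copies of $R\cdot R$ (your cases (a) and (b) are exactly the paper's computations (i) and (ii) in (\ref{BA32.2})). The only difference is cosmetic: the paper first lists all potentially nonzero families, including the one carrying the $c$-terms, and imposes the hypothesis afterwards, whereas you impose it at the outset.
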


\begin{proof}
We consider the conditions $(\widetilde{R}(\widetilde{X},\widetilde{Y})%
\widetilde{R})(\widetilde{Z},\widetilde{W})\widetilde{U}=0$ for all vector
fields $\widetilde{X},\widetilde{Y},\widetilde{Z},\widetilde{W}$ and $%
\widetilde{U}$ on $T^{\ast }M$.

For all cases $\alpha =(i,\overline{i})$, $\beta =(j,\overline{j})$, $\gamma
=(k,\overline{k})$, $\theta =(l,\overline{l})$, $\sigma =(m,\overline{m})$
and $\varepsilon =(h,\overline{h})$ in (\ref{BA32.1}), the non-zero
components of the tensor $((\widetilde{R}(\widetilde{X},\widetilde{Y})%
\widetilde{R})(\widetilde{Z},\widetilde{W})\widetilde{U})_{\alpha \beta
\gamma \theta \sigma }^{\text{ \ \ \ \ \ \ \ \ }\varepsilon }$ are as
follows: 
\begin{eqnarray}
i) &&((\widetilde{R}(\widetilde{X},\widetilde{Y})\widetilde{R})(\widetilde{Z}%
,\widetilde{W})\widetilde{U})_{ijklm}^{\text{ \ \ \ \ \ \ \ \ }h}
\label{BA32.2} \\
&=&\widetilde{R}_{ijp}^{\text{ \ \ \ }h}\widetilde{R}_{klm}^{\text{ \ \ \ \ }%
p}+\widetilde{R}_{ij\overline{p}}^{\text{ \ \ \ }h}\widetilde{R}_{klm}^{%
\text{ \ \ \ \ }\overline{p}}-\widetilde{R}_{ijk}^{\text{ \ \ \ }p}%
\widetilde{R}_{plm}^{\text{ \ \ \ \ }h}-\widetilde{R}_{ijk}^{\text{ \ \ \ }%
\overline{p}}\widetilde{R}_{\overline{p}lm}^{\text{ \ \ \ \ }h}  \notag \\
&&-\widetilde{R}_{ijl}^{\text{ \ \ \ }p}\widetilde{R}_{kpm}^{\text{ \ \ \ \ }%
h}-\widetilde{R}_{ijl}^{\text{ \ \ \ }\overline{p}}\widetilde{R}_{k\overline{%
p}m}^{\text{ \ \ \ \ }h}-\widetilde{R}_{ijm}^{\text{ \ \ \ }p}\widetilde{R}%
_{klp}^{\text{ \ \ \ \ }h}-\widetilde{R}_{ijm}^{\text{ \ \ \ }\overline{p}}%
\widetilde{R}_{kl\overline{p}}^{\text{ \ \ \ \ }h}  \notag \\
&=&R_{ijp}^{\text{ \ \ \ }h}R_{klm}^{\text{ \ \ \ \ }p}-R_{ijk}^{\text{ \ \
\ }p}R_{plm}^{\text{ \ \ \ \ }h}-R_{ijl}^{\text{ \ \ \ }p}R_{kpm}^{\text{ \
\ \ \ }h}-R_{ijm}^{\text{ \ \ \ }p}R_{klp}^{\text{ \ \ \ \ }h}  \notag \\
&=&((R(X,Y)R)(Z,W)U)_{ijklm}^{\text{ \ \ \ \ \ \ \ }h}.  \notag \\
&&ii)\text{ \ \ \ }((\widetilde{R}(\widetilde{X},\widetilde{Y})\widetilde{R}%
)(\widetilde{Z},\widetilde{W})\widetilde{U})_{ijkl\overline{m}}^{\text{ \ \
\ \ \ \ \ \ }\overline{h}}  \notag \\
&=&\widetilde{R}_{ijp}^{\text{ \ \ \ }\overline{h}}\widetilde{R}_{kl%
\overline{m}}^{\text{ \ \ \ \ }p}+\widetilde{R}_{ij\overline{p}}^{\text{ \ \
\ }\overline{h}}\widetilde{R}_{kl\overline{m}}^{\text{ \ \ \ \ }\overline{p}%
}-\widetilde{R}_{ijk}^{\text{ \ \ \ }p}\widetilde{R}_{pl\overline{m}}^{\text{
\ \ \ \ }\overline{h}}-\widetilde{R}_{ijk}^{\text{ \ \ \ }\overline{p}}%
\widetilde{R}_{\overline{p}l\overline{m}}^{\text{ \ \ \ \ }\overline{h}} 
\notag \\
&&-\widetilde{R}_{ijl}^{\text{ \ \ \ }p}\widetilde{R}_{kp\overline{m}}^{%
\text{ \ \ \ \ }\overline{h}}-\widetilde{R}_{ijl}^{\text{ \ \ \ }\overline{p}%
}\widetilde{R}_{k\overline{p}\overline{m}}^{\text{ \ \ \ \ }\overline{h}}-%
\widetilde{R}_{ij\overline{m}}^{\text{ \ \ \ }p}\widetilde{R}_{klp}^{\text{
\ \ \ \ }\overline{h}}-\widetilde{R}_{ij\overline{m}}^{\text{ \ \ \ }%
\overline{p}}\widetilde{R}_{kl\overline{p}}^{\text{ \ \ \ \ }\overline{h}} 
\notag \\
&=&-R_{ijp}^{\text{ \ \ \ }m}R_{klh}^{\text{ \ \ \ \ }p}+R_{ijh}^{\text{ \ \
\ }p}R_{klp}^{\text{ \ \ \ \ }m}+R_{ijk}^{\text{ \ \ \ }p}R_{plh}^{\text{ \
\ \ \ }m}+R_{ijl}^{\text{ \ \ \ }p}R_{kph}^{\text{ \ \ \ \ }m}  \notag \\
&=&-((R(X,Y)R)(Z,W)U)_{ijklh}^{\text{ \ \ \ \ \ \ \ }m}.  \notag \\
&&iii)\text{ \ }((\widetilde{R}(\widetilde{X},\widetilde{Y})\widetilde{R})(%
\widetilde{Z},\widetilde{W})\widetilde{U})_{ijklm}^{\text{ \ \ \ \ \ \ \ \ }%
\overline{h}}  \notag \\
&=&\widetilde{R}_{ijp}^{\text{ \ \ \ }\overline{h}}\widetilde{R}_{klm}^{%
\text{ \ \ \ \ }p}+\widetilde{R}_{ij\overline{p}}^{\text{ \ \ \ }\overline{h}%
}\widetilde{R}_{klm}^{\text{ \ \ \ \ }\overline{p}}-\widetilde{R}_{ijk}^{%
\text{ \ \ \ }p}\widetilde{R}_{plm}^{\text{ \ \ \ \ }\overline{h}}-%
\widetilde{R}_{ijk}^{\text{ \ \ \ }\overline{p}}\widetilde{R}_{\overline{p}%
lm}^{\text{ \ \ \ \ }\overline{h}}  \notag \\
&&-\widetilde{R}_{ijl}^{\text{ \ \ \ }p}\widetilde{R}_{kpm}^{\text{ \ \ \ \ }%
\overline{h}}-\widetilde{R}_{ijl}^{\text{ \ \ \ }\overline{p}}\widetilde{R}%
_{k\overline{p}m}^{\text{ \ \ \ \ }\overline{h}}-\widetilde{R}_{ijm}^{\text{
\ \ \ }p}\widetilde{R}_{klp}^{\text{ \ \ \ \ }\overline{h}}-\widetilde{R}%
_{ijm}^{\text{ \ \ \ }\overline{p}}\widetilde{R}_{kl\overline{p}}^{\text{ \
\ \ \ }\overline{h}}  \notag
\end{eqnarray}

If we assume that%
\begin{eqnarray*}
\widetilde{R}_{ijk}^{\text{ \ \ \ }\overline{h}} &=&\nabla _{i}(\nabla
_{k}c_{jh}-\nabla _{h}c_{jk})-\nabla _{j}(\nabla _{k}c_{ih}-\nabla
_{h}c_{ik}) \\
&&-R_{ijk}^{\text{ \ \ \ }m}c_{mh}-R_{ijh}^{\text{ \ \ \ }m}c_{km}=0,
\end{eqnarray*}%
then it follows from (\ref{BA32.2}) that $(\widetilde{R}(\widetilde{X},%
\widetilde{Y})\widetilde{R})(\widetilde{Z},\widetilde{W})\widetilde{U}=0$ if
and only if $(R(X,Y)R)(Z,W)U=0$. This completes the proof.
\end{proof}

Denote by $\widetilde{R}_{\alpha \beta }=$ $\widetilde{R}_{\sigma \alpha
\beta }^{\text{ \ \ \ \ \ \ }\sigma }$ the contracted curvature tensor
(Ricci tensor) of the metric connection $\widetilde{\nabla }$. The only
non-zero component of $\widetilde{R}_{\alpha \beta }$ is as follows: $%
\widetilde{R}_{ij}=R_{ij}$, where $R_{ij}$ denote the components of the
Ricci tensor of $\nabla $ on $M$. Now we prove the following theorem.

\begin{theorem}
Let $\nabla $ be a torsion-free linear connection on $M$ and $T^{\ast }M$ be
the cotangent bundle with the modified Riemannian extension $\overline{g}%
_{\nabla ,c}$ over $(M,\nabla )$. The cotangent bundle $T^{\ast }M$ is Ricci
semi-symmetric with respect to the metric connection $\widetilde{\nabla }$
if and only if the base manifold $M$ is Ricci semi-symmetric with respect to 
$\nabla $.
\end{theorem}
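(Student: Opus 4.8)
The plan is to run exactly the same computation that proved Theorem \ref{theore1}, but applied to the condition $\widetilde{R}(\widetilde{X},\widetilde{Y}).\widetilde{Ric}=0$ instead of $\widetilde{R}(\widetilde{X},\widetilde{Y}).\widetilde{R}=0$. First I would write out, in the adapted frame, the local expression
\begin{equation*}
((\widetilde{R}(\widetilde{X},\widetilde{Y})\widetilde{Ric})(\widetilde{Z},\widetilde{W}))_{\alpha\beta\gamma\theta}=-\widetilde{R}_{\alpha\beta\gamma}^{\phantom{\alpha\beta\gamma}\tau}\widetilde{R}_{\tau\theta}-\widetilde{R}_{\alpha\beta\theta}^{\phantom{\alpha\beta\theta}\tau}\widetilde{R}_{\gamma\tau},
\end{equation*}
the tensorial analogue of (\ref{BA32.1}). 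I would then feed in the explicit components of $\widetilde{R}$ from Proposition \ref{propo3} and the fact, recorded just above the statement, that the Ricci tensor $\widetilde{R}_{\alpha\beta}$ of $\widetilde{\nabla}$ has $\widetilde{R}_{ij}=R_{ij}$ as its \emph{only} nonzero component (in particular all mixed and barred blocks vanish).

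The key simplification is that, because $\widetilde{R}_{\tau\theta}$ and $\widetilde{R}_{\gamma\tau}$ are nonzero only when both free indices are unbarred, the only summation index $\tau$ that contributes is an unbarred $\tau=p$. Hence for each choice of $\alpha,\beta,\gamma,\theta$ the expression collapses to
\begin{equation*}
-\widetilde{R}_{\alpha\beta\gamma}^{\phantom{\alpha\beta\gamma}p}R_{p\theta}-\widetilde{R}_{\alpha\beta\theta}^{\phantom{\alpha\beta\theta}p}R_{\gamma p},
\end{equation*}
with the convention that $R_{p\theta}$ or $R_{\gamma p}$ is zero unless the relevant index is unbarred. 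Running through the cases: if any of $\alpha,\beta$ is barred then $\widetilde{R}_{\alpha\beta\gamma}^{\phantom{\alpha\beta\gamma}p}=0$ by Proposition \ref{propo3}, so the whole thing vanishes identically. If $\alpha=i,\beta=j$ are both unbarred, then $\widetilde{R}_{ij\gamma}^{\phantom{ij\gamma}p}$ is nonzero only for $\gamma=k$ unbarred (giving $R_{ijk}^{\phantom{ijk}p}$) or $\gamma=\overline{k}$ (giving $R_{jik}^{\phantom{jik}p}$, which however carries an upper unbarred index only in the $E_{\overline p}$ slot, hence does not pair with $R_{p\theta}$ — I will check this index bookkeeping carefully). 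The upshot is that the unique surviving component is
\begin{equation*}
((\widetilde{R}(\widetilde{X},\widetilde{Y})\widetilde{Ric})(\widetilde{Z},\widetilde{W}))_{ijkl}=-R_{ijk}^{\phantom{ijk}p}R_{pl}-R_{ijl}^{\phantom{ijl}p}R_{kp}=((R(X,Y)Ric)(Z,W))_{ijkl},
\end{equation*}
so the tensor $\widetilde{R}.\widetilde{Ric}$ on $T^\ast M$ vanishes identically if and only if $R.Ric$ vanishes on $M$, which is exactly the asserted equivalence.

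I expect the only delicate point to be the index bookkeeping in the mixed cases — specifically verifying that the terms coming from $\widetilde{R}(E_i,E_j)E_{\overline k}=R_{jih}^{\phantom{jih}k}E_{\overline h}$ and from the barred part of $\widetilde{R}(E_i,E_j)E_k$ never contract against the nonzero Ricci block $R_{ij}$, because their only upper index that is ``alive'' sits in a barred slot $E_{\overline h}$ while $\widetilde{R}_{\tau\theta}$ needs an unbarred $\tau$. Once that is confirmed, no hypothesis on $c$ is needed (unlike in Theorem \ref{theore1}), since the barred component $\widetilde{R}_{ijk}^{\phantom{ijk}\overline h}$ — the one that forced the extra assumption there — here only ever multiplies a vanishing Ricci component. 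The rest is the routine case-check I have sketched, and the converse direction is immediate by reversing it. This completes the plan; writing it out is a short computation.
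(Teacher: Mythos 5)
Your proposal is correct and follows essentially the same route as the paper: write the components of $\widetilde{R}\cdot\widetilde{Ric}$ in the adapted frame, insert the curvature components of Proposition \ref{propo3} together with the fact that $\widetilde{R}_{ij}=R_{ij}$ is the only nonvanishing Ricci component, and observe that the unique surviving component is $((R(X,Y)Ric)(Z,W))_{ijkl}$, so no hypothesis on $c$ is needed. The only (immaterial) difference is a sign convention in the component formula, which does not affect the vanishing statement.
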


\begin{proof}
The tensor $(\widetilde{R}(\widetilde{X},\widetilde{Y})\widetilde{Ric})(%
\widetilde{Z},\widetilde{W})$ has the components%
\begin{equation}
((\widetilde{R}(\widetilde{X},\widetilde{Y})\widetilde{Ric})(\widetilde{Z},%
\widetilde{W}))_{\alpha \beta \gamma \theta }=\widetilde{R}_{\alpha \beta
\gamma }^{\text{ \ \ \ }\varepsilon }\widetilde{R}_{\varepsilon \theta }+%
\widetilde{R}_{\alpha \beta \theta }^{\text{ \ \ \ }\varepsilon }\widetilde{R%
}_{\gamma \varepsilon }  \label{BA32.3}
\end{equation}%
with respect to the adapted frame $\{E_{\beta }\}$.

Choosing $\alpha =i,\beta =j,\gamma =k,\theta =l$ in (\ref{BA32.2}), we find%
\begin{eqnarray*}
((\widetilde{R}(\widetilde{X},\widetilde{Y})\widetilde{Ric})(\widetilde{Z},%
\widetilde{W}))_{ijkl} &=&\widetilde{R}_{ijk}^{\text{ \ \ \ }p}\widetilde{R}%
_{pl}+\widetilde{R}_{ijl}^{\text{ \ \ \ }p}\widetilde{R}_{kp} \\
&=&R_{ijk}^{\text{ \ \ \ }p}R_{pl}+R_{ijl}^{\text{ \ \ \ }p}R_{kp} \\
&=&((R(X,Y)Ric)(Z,W))_{ijkl},
\end{eqnarray*}%
all the others being zero. This finishes the proof.
\end{proof}

For the scalar curvature $\widetilde{r}$ of the metric connection ${%
\widetilde{{}\nabla }}$ with respect to $\overline{g}_{\nabla ,c}$, with the
help of (\ref{BA3.2}) we find%
\begin{equation*}
\widetilde{r}=\widetilde{R}_{\alpha \beta }(\overline{g}_{\nabla
,c})^{\alpha \beta }=0.
\end{equation*}%
Thus we have the following theorem.

\begin{theorem}
Let $\nabla $ be a torsion-free linear connection on $M$ and $T^{\ast }M$ be
the cotangent bundle with the modified Riemannian extension $\widetilde{g}%
_{\nabla ,c}$ over $(M,\nabla )$. The scalar curvature of the cotangent
bundle $T^{\ast }M$ with the metric connection ${\widetilde{{}\nabla }}$
with respect to $\widetilde{g}_{\nabla ,c}$ vanishes.
\end{theorem}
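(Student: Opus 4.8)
The plan is to compute the scalar curvature directly by contracting the Ricci tensor $\widetilde{R}_{\alpha\beta}$ of the metric connection $\widetilde{\nabla}$ with the inverse metric $(\overline{g}_{\nabla,c})^{\alpha\beta}$. From the discussion preceding the statement, we already know that the only non-vanishing component of $\widetilde{R}_{\alpha\beta}$ in the adapted frame $\{E_\beta\}$ is $\widetilde{R}_{ij}=R_{ij}$, i.e., the components $\widetilde{R}_{i\overline{j}}$, $\widetilde{R}_{\overline{i}j}$ and $\widetilde{R}_{\overline{i}\,\overline{j}}$ all vanish. On the other hand, from (\ref{BA3.2}) the inverse metric has block form with $(\overline{g}_{\nabla,c})^{ij}=0$, $(\overline{g}_{\nabla,c})^{i\overline{j}}=(\overline{g}_{\nabla,c})^{\overline{i}j}=\delta^i_j$ and $(\overline{g}_{\nabla,c})^{\overline{i}\,\overline{j}}=-c_{ij}$.

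The computation then proceeds by expanding the contraction over the full range of adapted-frame indices:
\begin{equation*}
\widetilde{r}=\widetilde{R}_{\alpha\beta}(\overline{g}_{\nabla,c})^{\alpha\beta}
=\widetilde{R}_{ij}(\overline{g}_{\nabla,c})^{ij}
+2\widetilde{R}_{i\overline{j}}(\overline{g}_{\nabla,c})^{i\overline{j}}
+\widetilde{R}_{\overline{i}\,\overline{j}}(\overline{g}_{\nabla,c})^{\overline{i}\,\overline{j}}.
\end{equation*}
The second and third terms vanish because $\widetilde{R}_{i\overline{j}}=0$ and $\widetilde{R}_{\overline{i}\,\overline{j}}=0$. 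The first term is $R_{ij}(\overline{g}_{\nabla,c})^{ij}=R_{ij}\cdot 0=0$, since the $(ij)$-block of the inverse metric is identically zero. Hence $\widetilde{r}=0$, which is precisely the asserted vanishing of the scalar curvature.

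There is essentially no obstacle here: the result is a direct consequence of two facts already established in the excerpt — the vanishing of all Ricci components except $\widetilde{R}_{ij}$, and the vanishing of the $(ij)$-block of $(\overline{g}_{\nabla,c})^{\beta\gamma}$. The only point requiring a modest amount of care is bookkeeping with the adapted-frame indices and the symmetry factor of $2$ on the mixed term, together with noting that the surviving pairing $\widetilde{R}_{ij}$ against $(\overline{g}_{\nabla,c})^{ij}$ is zero not because the Ricci tensor vanishes but because the corresponding metric block does. I would therefore present the proof as a two-line index computation, citing Proposition \ref{propo3} (and the remark immediately after it identifying $\widetilde{R}_{ij}=R_{ij}$) for the Ricci components and (\ref{BA3.2}) for the inverse metric.
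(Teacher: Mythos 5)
Your proof is correct and is essentially the paper's own argument: contract the Ricci tensor of $\widetilde{\nabla}$, whose only nonzero components are $\widetilde{R}_{ij}=R_{ij}$, with the inverse metric (\ref{BA3.2}), whose $(ij)$-block vanishes, so $\widetilde{r}=\widetilde{R}_{\alpha\beta}(\overline{g}_{\nabla,c})^{\alpha\beta}=0$. The paper states this in one line; your version merely makes the index bookkeeping explicit (the symmetry factor $2$ on the mixed term is harmless since those Ricci components vanish anyway).
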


Next, we shall apply the differential operator $\widetilde{R}(\widetilde{X},%
\widetilde{Y})$ to the torsion tensor $\widetilde{T}$ of the metric
connection $\widetilde{\nabla }$.

\begin{theorem}
Let $\nabla $ be a torsion-free linear connection on $M$ and $T^{\ast }M$ be
the cotangent bundle with the modified Riemannian extension $\overline{g}%
_{\nabla ,c}$ over $(M,\nabla )$. Then $\widetilde{R}(\widetilde{X},%
\widetilde{Y}).\widetilde{T}=0$ for all vector fields $\widetilde{X}$ and $%
\widetilde{Y}$ on $T^{\ast }M$, where $\widetilde{T}$ is the torsion tensor
of the metric connection $\widetilde{\nabla }$ if and only if the base
manifold $M$ is semi-symmetric with respect to $\nabla $.
\end{theorem}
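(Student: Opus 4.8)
The plan is to mimic the argument used for Theorem~\ref{theore1} and for the Ricci semi-symmetry theorem above: write $\widetilde{R}(\widetilde{X},\widetilde{Y}).\widetilde{T}$ in the adapted frame $\{E_{\beta}\}$, show that for formal reasons almost all of its components vanish identically, and then identify the single surviving component with the curvature-of-curvature tensor of $(M,\nabla)$ displayed just before Theorem~\ref{theore1}.

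Since $\widetilde{R}(\widetilde{X},\widetilde{Y})$ acts on tensor fields as a derivation and $\widetilde{T}$ is of type $(1,2)$, I would start from
\begin{equation*}
((\widetilde{R}(\widetilde{X},\widetilde{Y}).\widetilde{T})(\widetilde{Z},\widetilde{W}))_{\alpha\beta\gamma\theta}^{\ \ \ \ \ \ \ \ \varepsilon}=\widetilde{R}_{\alpha\beta\tau}^{\ \ \ \varepsilon}\widetilde{T}_{\gamma\theta}^{\ \ \tau}-\widetilde{R}_{\alpha\beta\gamma}^{\ \ \ \tau}\widetilde{T}_{\tau\theta}^{\ \ \varepsilon}-\widetilde{R}_{\alpha\beta\theta}^{\ \ \ \tau}\widetilde{T}_{\gamma\tau}^{\ \ \varepsilon}.
\end{equation*}
Then I would invoke (\ref{BA3.6}), which says $\widetilde{T}_{\alpha\beta}^{\ \ \gamma}$ vanishes unless $\alpha,\beta$ are both unbarred and $\gamma$ is barred, together with Proposition~\ref{propo3}, which says $\widetilde{R}_{\alpha\beta\gamma}^{\ \ \ \varepsilon}$ vanishes unless $\alpha,\beta$ are both unbarred. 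Running through the index types $\alpha=(i,\overline{i}),\beta=(j,\overline{j}),\dots$ as in the proof of Theorem~\ref{theore1}, these two facts force every component of $\widetilde{R}(\widetilde{X},\widetilde{Y}).\widetilde{T}$ to be zero except possibly $((\widetilde{R}(\widetilde{X},\widetilde{Y}).\widetilde{T})(\widetilde{Z},\widetilde{W}))_{ijkl}^{\ \ \ \ \ \ \ \ \overline{h}}$ with $i,j,k,l$ all unbarred. Note that the vertical part of $\widetilde{R}(E_{i},E_{j})E_{k}$ never contributes, since it is always fed into a slot of $\widetilde{T}$ where $\widetilde{T}$ vanishes; so, unlike in Theorem~\ref{theore1}, no auxiliary hypothesis on the tensor $c$ is needed.

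Next I would compute the surviving component explicitly. Substituting $\widetilde{T}_{kl}^{\ \ \overline{r}}=-p_{s}R_{klr}^{\ \ \ s}$, $\widetilde{R}_{ij\overline{r}}^{\ \ \ \overline{h}}=R_{jih}^{\ \ \ r}$ and $\widetilde{R}_{ijk}^{\ \ \ m}=R_{ijk}^{\ \ \ m}$ from Proposition~\ref{propo3}, and using the symmetries of $R$ (its skew-symmetry in the first two indices, and the first Bianchi identity to reorganize terms), the three summands should collapse, up to sign, to $p_{s}\,((R(X,Y)R)(Z,W)U)_{ijklh}^{\ \ \ \ \ \ \ s}$, i.e. to the contraction of the tautological covector $p=(p_{s})$ with the base curvature-of-curvature tensor of $(M,\nabla)$. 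This is the computational heart of the proof and the step I expect to be the main obstacle: one must check carefully that the quadratic-in-$R$ terms produced by the derivation acting on the two lower slots of $\widetilde{T}$, together with the term produced by $\widetilde{R}(E_{i},E_{j})$ acting on the vertical value $\widetilde{T}(E_{k},E_{l})$, recombine exactly into $p_{s}\,((R(X,Y)R)(Z,W)U)_{ijklh}^{\ \ \ \ \ \ \ s}$. Everything on either side of this identification is routine index bookkeeping.

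Finally I would conclude as follows. By the previous steps, $\widetilde{R}(\widetilde{X},\widetilde{Y}).\widetilde{T}=0$ for all vector fields $\widetilde{X},\widetilde{Y}$ on $T^{\ast}M$ if and only if $p_{s}\,((R(X,Y)R)(Z,W)U)_{ijklh}^{\ \ \ \ \ \ \ s}=0$ at every point $(x,p)$ of $T^{\ast}M$. For fixed $x$ the covector $p$ ranges over the entire fibre $T_{x}^{\ast}M$, so this holds for all $p$ exactly when $((R(X,Y)R)(Z,W)U)_{ijklh}^{\ \ \ \ \ \ \ s}=0$ at $x$, that is, exactly when $(M,\nabla)$ is semi-symmetric at $x$. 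Letting $x$ vary over $M$ yields the asserted equivalence.
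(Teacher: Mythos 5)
Your reduction steps are fine and follow the paper: in the adapted frame the only possibly non-vanishing component of $\widetilde{R}(\widetilde{X},\widetilde{Y}).\widetilde{T}$ is $((\widetilde{R}(\widetilde{X},\widetilde{Y})\widetilde{T})(\widetilde{Z},\widetilde{W}))_{ijkl}{}^{\overline{h}}$, and you are right that no hypothesis on $c$ is needed, because the $c$-dependent vertical part of $\widetilde{R}(E_{i},E_{j})E_{k}$ is always fed into a slot in which $\widetilde{T}$ vanishes. The gap is exactly the step you postpone. Substituting $\widetilde{T}_{kl}{}^{\overline{m}}=-p_{s}R_{klm}{}^{s}$, $\widetilde{R}_{ij\overline{m}}{}^{\overline{h}}=R_{jih}{}^{m}$ and $\widetilde{R}_{ijk}{}^{m}=R_{ijk}{}^{m}$ gives
\begin{equation*}
((\widetilde{R}(\widetilde{X},\widetilde{Y})\widetilde{T})(\widetilde{Z},\widetilde{W}))_{ijkl}{}^{\overline{h}}=p_{s}\left(R_{ijh}{}^{m}R_{klm}{}^{s}+R_{ijk}{}^{m}R_{mlh}{}^{s}+R_{ijl}{}^{m}R_{kmh}{}^{s}\right),
\end{equation*}
and a direct comparison with the four-term formula for $((R(X,Y)R)(Z,W)U)_{ijklh}{}^{s}$ shows that this three-term sum equals $-p_{s}((R(X,Y)R)(Z,W)U)_{ijklh}{}^{s}+p_{s}R_{ijm}{}^{s}R_{klh}{}^{m}$. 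The extra term $p_{s}R_{ijm}{}^{s}R_{klh}{}^{m}$ is the contribution that, for the $(1,3)$-tensor $R$, comes from the derivation acting on the contravariant slot; since $\widetilde{T}$ is only of type $(1,2)$ and its value is vertical, no term of $\widetilde{R}(\widetilde{X},\widetilde{Y}).\widetilde{T}$ produces it, and neither skew-symmetry in $i,j$ nor the first Bianchi identity makes it disappear. So the collapse you ``expect'' is not an identity.

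Consequently your final fibre argument (correct in itself) only shows that $\widetilde{R}(\widetilde{X},\widetilde{Y}).\widetilde{T}=0$ is equivalent to $R_{ijh}{}^{m}R_{klm}{}^{s}+R_{ijk}{}^{m}R_{mlh}{}^{s}+R_{ijl}{}^{m}R_{kmh}{}^{s}=0$, i.e. to $R(Z,W)R(X,Y)+R(R(X,Y)Z,W)+R(Z,R(X,Y)W)=0$ for all arguments, which is not the semi-symmetry condition: together with $R\cdot R=0$ it forces every composite $R(X,Y)\circ R(Z,W)$ to vanish, and for instance the Levi-Civita connection of a round sphere satisfies $R\cdot R=0$ while the three-term expression then equals $R(X,Y)R(Z,W)\neq 0$. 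Be aware that the paper's own proof makes precisely the identification you hoped for in its last displayed equality, so you correctly located the delicate point; but deferring it as routine index bookkeeping leaves the proposal incomplete, and as stated the identification cannot be carried out without the additional, very restrictive, hypothesis $R_{ijm}{}^{s}R_{klh}{}^{m}=0$.
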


\begin{proof}
The differential operator $\widetilde{R}(\widetilde{X},\widetilde{Y})$
applied the torsion tensor $\widetilde{T}$ of the metric connection $%
\widetilde{\nabla }$ is in the form: 
\begin{eqnarray*}
&&((\widetilde{R}(\widetilde{X},\widetilde{Y})\widetilde{T})(\widetilde{Z},%
\widetilde{W}))_{\alpha \beta \gamma \theta }^{\text{ \ \ \ \ \ \ \ \ }%
\varepsilon } \\
&=&\widetilde{R}_{\alpha \beta \tau }^{\text{ \ \ \ }\varepsilon }\widetilde{%
T}_{\gamma \theta }^{\text{ \ \ \ }\tau }-\widetilde{R}_{\alpha \beta \gamma
}^{\text{ \ \ \ }\tau }\widetilde{T}_{\tau \theta }^{\text{ \ \ \ }%
\varepsilon }-\widetilde{R}_{\alpha \beta \theta }^{\text{ \ \ \ }\tau }%
\widetilde{T}_{\gamma \tau }^{\text{ \ \ \ }\varepsilon }
\end{eqnarray*}%
with respect to the adapted frame $\{E_{\beta }\}$. It follows immediately
that 
\begin{equation*}
\left\{ 
\begin{array}{c}
((\widetilde{R}(\widetilde{X},\widetilde{Y})\widetilde{T})(\widetilde{Z},%
\widetilde{W}))_{ijkl}^{\text{ \ \ \ \ }\overline{h}}=\widetilde{R}_{ijm}^{%
\text{ \ \ }\overline{h}}\widetilde{T}_{kl}^{\text{ \ }m}+\widetilde{R}_{ij%
\overline{m}}^{\text{ \ \ }\overline{h}}\widetilde{T}_{kl}^{\text{ \ }%
\overline{m}} \\ 
-\widetilde{R}_{ijk}^{\text{ \ \ }m}\widetilde{T}_{ml}^{\text{ \ }\overline{h%
}}-\widetilde{R}_{ijk}^{\text{ \ \ }\overline{m}}\widetilde{T}_{\overline{m}%
l}^{\text{ \ }\overline{h}}-\widetilde{R}_{ijl}^{\text{ \ \ }m}\widetilde{T}%
_{km}^{\text{ \ }\overline{h}}-\widetilde{R}_{ijl}^{\text{ \ \ }\overline{m}}%
\widetilde{T}_{k\overline{m}}^{\text{ \ }\overline{h}} \\ 
=p_{s}(R_{ijh}^{\text{ \ \ }m}R_{klm}^{\text{ \ \ }s}+R_{ijk}^{\text{ \ \ }%
m}R_{mlh}^{\text{ \ \ }s}+R_{ijl}^{\text{ \ \ }m}R_{kmh}^{\text{ \ \ }s}) \\ 
=-p_{s}((R(X,Y)R)(Z,W)U)_{ijklh}^{\text{ \ \ \ \ \ \ \ }s}, \\ 
otherwise=0%
\end{array}%
\right.
\end{equation*}%
which finishes the proof.
\end{proof}

On an $n-$dimensional Riemannian manifold $(M,g)$, it was defined a
generalized $(0,2)-$symmetric $Z$ tensor given by \cite{Mantica} 
\begin{equation*}
Z(X,Y)=Ric(X,Y)+\phi g(X,Y)
\end{equation*}%
for all vector fields $X$ and $Y$ on $M$, where where $\phi $ is an
arbitrary scalar function. Analogous to this definition, it may be locally
define generalized $\widetilde{Z}$ tensor on $(T^{\ast }M,\overline{g}%
_{\nabla ,c})$ with respect to the metric connection $\widetilde{\nabla }$
as follows:%
\begin{equation*}
\widetilde{Z}_{\alpha \beta }=\widetilde{R}_{\alpha \beta }+\widetilde{\phi }%
(\overline{g}_{\nabla ,c})_{\alpha \beta }.
\end{equation*}%
Putting the values of $\widetilde{R}_{\alpha \beta }$ and $\overline{g}%
_{\nabla ,c}$ in the above equation, we have the non-zero components%
\begin{eqnarray}
\widetilde{Z}_{ij} &=&R_{ij}+\widetilde{\phi }c_{ij},  \label{BA32.4} \\
\widetilde{Z}_{\overline{i}j} &=&\widetilde{\phi }\delta _{j}^{i},  \notag \\
\widetilde{Z}_{i\overline{j}} &=&\widetilde{\phi }\delta _{j}^{i}.  \notag
\end{eqnarray}%
We can state the following theorem.

\begin{theorem}
Let $\nabla $ be a torsion-free linear connection on $M$ and $T^{\ast }M$ be
the cotangent bundle with the modified Riemannian extension $\overline{g}%
_{\nabla ,c}$ over $(M,\nabla )$. The cotangent bundle $T^{\ast }M$ is $%
\widetilde{Z}$ semi-symmetric with respect to the metric connection $%
\widetilde{\nabla }$ if and only if the base manifold $M$ is Ricci
semi-symmetric with respect to $\nabla $.
\end{theorem}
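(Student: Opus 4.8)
The plan is to compute the components of the tensor $\widetilde{R}(\widetilde{X},\widetilde{Y})\widetilde{Z}$ in the adapted frame and then impose the vanishing condition case-by-case on the indices, exactly as in the proofs of the preceding semi-symmetry theorems. First I would write down the general formula
\begin{equation*}
((\widetilde{R}(\widetilde{X},\widetilde{Y})\widetilde{Z})(\widetilde{Z},\widetilde{W}))_{\alpha \beta \gamma \theta }=\widetilde{R}_{\alpha \beta \gamma }^{\text{ \ \ \ }\varepsilon }\widetilde{Z}_{\varepsilon \theta }+\widetilde{R}_{\alpha \beta \theta }^{\text{ \ \ \ }\varepsilon }\widetilde{Z}_{\gamma \varepsilon },
\end{equation*}
which is the same shape as (\ref{BA32.3}) with $\widetilde{Ric}$ replaced by $\widetilde{Z}$. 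Then I would substitute the non-zero curvature components from Proposition \ref{propo3} together with the explicit components of $\widetilde{Z}$ from (\ref{BA32.4}).

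Next I would go through the index cases. The only curvature components that survive are $\widetilde{R}_{ijk}^{\ h}=R_{ijk}^{\ h}$, $\widetilde{R}_{ijk}^{\ \overline{h}}$ (the tensor with $c$'s), and $\widetilde{R}_{ij\overline{k}}^{\ \overline{h}}=R_{jih}^{\ k}$. Contracting these against the three types of $\widetilde{Z}$-components, the key observation is that the terms involving $\widetilde{\phi }$ and $c_{ij}$ will combine in such a way that $\widetilde{Z}_{ij}=R_{ij}+\widetilde{\phi }c_{ij}$ appears, and the extra $\widetilde{\phi }c$-pieces get cancelled by contributions coming from $\widetilde{R}_{ijk}^{\ \overline{h}}$ contracted with $\widetilde{Z}_{\overline{\varepsilon }\theta }=\widetilde{\phi }\delta$ and by the Bianchi-type symmetry $R_{ijk}^{\ m}c_{ml}+R_{ijl}^{\ m}c_{km}$ structure already present inside $\widetilde{R}_{ijk}^{\ \overline{h}}$. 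When the dust settles, the purely horizontal case reduces to $R_{ijk}^{\ p}R_{pl}+R_{ijl}^{\ p}R_{kp}=((R(X,Y)Ric)(Z,W))_{ijkl}$, i.e. the Ricci semi-symmetry condition of $(M,\nabla )$, while all mixed components vanish identically once one uses that $\widetilde{\phi }$ depends only on the $x^h$ or more precisely cancels; any remaining component containing a bar index will either be $0$ by Proposition \ref{propo3} or will be a multiple of $R_{ij}-R_{ij}=0$.

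The main obstacle I expect is the bookkeeping in the mixed cases (one or two barred indices among $\gamma ,\theta $), where one must verify that the $\widetilde{\phi }$-terms coming from $\widetilde{Z}_{\overline{i}j}=\widetilde{\phi }\delta_j^i$ and $\widetilde{Z}_{i\overline{j}}=\widetilde{\phi }\delta_j^i$ contracted with $\widetilde{R}_{ij\overline{k}}^{\ \overline{h}}$ cancel against the corresponding terms with $\widetilde{Z}_{ij}$ contracted with $\widetilde{R}_{ijk}^{\ \overline{h}}$; this is precisely where the special form of $\widetilde{R}_{ijk}^{\ \overline{h}}$ (which is $\widetilde{\nabla}$-curvature, not $R$) matters, and one has to be careful not to accidentally reintroduce the $c$-dependent obstruction term that appeared in Theorem \ref{theore1}. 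A useful simplification is to note that since $\widetilde{r}=0$ and $\widetilde{R}_{\overline{i}\overline{j}}=0$, several of the a priori possible contractions are automatically zero, so in practice only a handful of cases genuinely need to be checked. Finally, for the converse, one reverses the computation: Ricci semi-symmetry of $M$ gives the vanishing of the horizontal component, and all other components were already identically zero, so $\widetilde{Z}$ semi-symmetry of $T^{\ast }M$ follows. This completes the proof.
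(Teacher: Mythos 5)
Your proposal is correct and is essentially the paper's own proof: the same component formula for $\widetilde{R}\cdot\widetilde{Z}$ in the adapted frame, substitution of the curvature components of $\widetilde{\nabla }$ and of (\ref{BA32.4}), cancellation of all $\widetilde{\phi }$- and $c$-dependent terms, and reduction of the only surviving component to $R_{ijk}^{\ \ \ p}R_{pl}+R_{ijl}^{\ \ \ p}R_{kp}=((R(X,Y)Ric)(Z,W))_{ijkl}$, giving the equivalence with Ricci semi-symmetry of $(M,\nabla )$. One small repair to your sketch: the mixed components (one barred index among $\gamma ,\theta $) vanish because they reduce to $\widetilde{\phi }\,(R_{ijk}^{\ \ \ l}+R_{jik}^{\ \ \ l})=0$ by the antisymmetry of $R$ in its first two indices, not to a multiple of $R_{ij}-R_{ij}$.
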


\begin{proof}
The tensor ($\widetilde{R}(\widetilde{X},\widetilde{Y}).\widetilde{Z})(%
\widetilde{Z},\widetilde{W})$ has the components%
\begin{equation}
((\widetilde{R}(\widetilde{X},\widetilde{Y}).\widetilde{Z})(\widetilde{Z},%
\widetilde{W}))_{\alpha \beta \gamma \theta }=\widetilde{R}_{\alpha \beta
\gamma }^{\text{ \ \ \ }\varepsilon }\widetilde{Z}_{\varepsilon \theta }+%
\widetilde{R}_{\alpha \beta \theta }^{\text{ \ \ \ }\varepsilon }\widetilde{Z%
}_{\gamma \varepsilon }  \label{BA32.5}
\end{equation}%
with respect to the adapted frame $\{E_{\beta }\}$.

By choosing $\alpha =(i,\overline{i})$, $\beta =(j,\overline{j})$, $\gamma
=(k,\overline{k})$ and $\theta =(l,\overline{l})$ in (\ref{BA32.5}), in view
of (\ref{BA32.4}) we find the only non-zero component%
\begin{eqnarray*}
&&((\widetilde{R}(\widetilde{X},\widetilde{Y}).\widetilde{Z})(\widetilde{Z},%
\widetilde{W}))_{ijkl} \\
&=&\widetilde{R}_{ijk}^{\text{ \ \ \ }h}\widetilde{Z}_{hl}+\widetilde{R}%
_{ijk}^{\text{ \ \ \ }\overline{h}}\widetilde{Z}_{\overline{h}l}+\widetilde{R%
}_{ijl}^{\text{ \ \ \ }h}\widetilde{Z}_{kh}+\widetilde{R}_{ijl}^{\text{ \ \
\ }\overline{h}}\widetilde{Z}_{k\overline{h}} \\
&=&R_{ijk}^{\text{ \ \ \ }h}(R_{hl}+\widetilde{\phi }c_{hl})+\frac{1}{2}%
\{\nabla _{i}(\nabla _{k}c_{jh}-\nabla _{h}c_{jk}) \\
&&-\nabla _{j}(\nabla _{k}c_{ih}-\nabla _{h}c_{ik})-R_{ijk}^{\text{ \ \ \ }%
m}c_{mh}-R_{ijh}^{\text{ \ \ \ }m}c_{km}\}\widetilde{\phi }\delta _{l}^{h} \\
&&+R_{ijl}^{\text{ \ \ \ }h}(R_{kh}+\widetilde{\phi }c_{kh})+\frac{1}{2}%
\{\nabla _{i}(\nabla _{l}c_{jh}-\nabla _{h}c_{jl}) \\
&&-\nabla _{j}(\nabla _{l}c_{ih}-\nabla _{h}c_{il})-R_{ijl}^{\text{ \ \ \ }%
m}c_{mh}-R_{ijh}^{\text{ \ \ \ }m}c_{lm}\}\widetilde{\phi }\delta _{k}^{h} \\
&=&R_{ijk}^{\text{ \ \ \ }h}R_{hl}+R_{ijl}^{\text{ \ \ \ }h}R_{kh} \\
&=&(R(X,Y)Ric)_{ijkl},
\end{eqnarray*}%
from which the proof follows.
\end{proof}

\subsection{Conharmonic Curvature tensor on the cotangent bundle with
respect to the metric connection $\protect\widetilde{\protect\nabla }$}

We recall that the conharmonic curvature tensor $V$ on an $n-$dimensional
Riemannian manifold $(M,g)$ is defined as a $(4,0)-$tensor by the formula

\begin{equation*}
V_{ijkl}=R_{ijkl}-\frac{1}{n-2}\left[
R_{jk}g_{il}-R_{ik}g_{jl}-R_{jl}g_{ik}+R_{il}g_{jk}\right] ,
\end{equation*}%
where $R_{ijkl}$ and $R_{ij}$ are respectively the components of the
Riemannian curvature tensor and the Ricci tensor. The conharmonic curvature
tensor was first introduced by Ishii (see, \cite{Ishii}). A Riemanian
manifold whose conharmonic curvature tensor vanishes is called
conharmonically flat.

Analogous to the conharmonic curvature tensor with respect to a Levi--Civita
connection $\nabla $, it may be given the conharmonic curvature tensor $%
\widetilde{V}$ on $T^{\ast }M$ with respect to the metric connection $%
\widetilde{\nabla }$ as follows:%
\begin{equation*}
\widetilde{V}_{\alpha \beta \gamma \varepsilon }=\widetilde{R}_{\alpha \beta
\gamma \varepsilon }-\frac{1}{2(n-1)}\left[ \widetilde{R}_{\beta \gamma }(%
\overline{g}_{\nabla ,c})_{\alpha \varepsilon }-\widetilde{R}_{\alpha \gamma
}(\overline{g}_{\nabla ,c})_{\beta \varepsilon }-\widetilde{R}_{\beta
\varepsilon }(\overline{g}_{\nabla ,c})_{\alpha \gamma }+\widetilde{R}%
_{\alpha \varepsilon }(\overline{g}_{\nabla ,c})_{\beta \gamma }\right] .
\end{equation*}%
Next we prove the following theorem:

\begin{theorem}
Let $\nabla $ be a torsion-free linear connection on $M$ and $T^{\ast }M$ be
the cotangent bundle with the modified Riemannian extension $\widetilde{g}%
_{\nabla ,c}$ over $(M,\nabla )$. The cotangent bundle $T^{\ast }M$ is
locally conharmonically flat with respect to the metric connection $%
\widetilde{\nabla }$ if and only if the base manifold $M$ is Ricci flat with
respect to $\nabla $ and the components $c_{ij}$ of $c$ satisfy the condition%
\begin{equation*}
\nabla _{i}(\nabla _{k}c_{jh}-\nabla _{h}c_{jk})-\nabla _{j}(\nabla
_{k}c_{ih}-\nabla _{h}c_{ik})+R_{ijk}^{\text{ \ \ \ }m}c_{mh}-R_{ijh}^{\text{
\ \ \ }m}c_{km}=0,
\end{equation*}%
where $R_{ijk}^{\text{ \ \ \ }m}$ denote the components of the curvature
tensor $R$ of $\nabla $.
\end{theorem}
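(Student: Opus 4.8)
The plan is to compute all components of the conharmonic curvature tensor $\widetilde{V}_{\alpha\beta\gamma\varepsilon}$ with respect to the adapted frame and determine when they all vanish. First I would lower the last index in Proposition \ref{propo3} using the metric $(\overline{g}_{\nabla,c})_{\beta\gamma}$ from (\ref{BA3.1}) to obtain the fully covariant curvature components $\widetilde{R}_{\alpha\beta\gamma\varepsilon}$; the nonzero ones will be $\widetilde{R}_{ijkl}$ (involving $R_{ijkl}$, the second covariant derivatives of $c$, and the $R\cdot c$ terms) and the block $\widetilde{R}_{ijk\overline{l}}=R_{jih}{}^{k}$-type terms coming from the pairing $\omega(X)$ in the metric. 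Then, recalling that the only nonzero Ricci component is $\widetilde{R}_{ij}=R_{ij}$ (stated just before the Ricci semi-symmetry theorem), I would substitute into the definition of $\widetilde{V}$ and separate the identity $\widetilde{V}_{\alpha\beta\gamma\varepsilon}=0$ into cases according to how many of the indices are barred.

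The key case-by-case analysis proceeds as follows. For the all-unbarred component $\widetilde{V}_{ijkl}$, the equation becomes $\widetilde{R}_{ijkl}-\frac{1}{2(n-1)}[R_{jk}c_{il}-R_{ik}c_{jl}-R_{jl}c_{ik}+R_{il}c_{jk}]=0$, where $\widetilde{R}_{ijkl}$ contains both the pure $R_{ijkl}$ piece and the $c$-dependent piece from Proposition \ref{propo3}. For the components with exactly one barred index, say $\widetilde{V}_{ijk\overline{l}}$, only the metric terms $(\overline{g}_{\nabla,c})_{\alpha\varepsilon}=\delta$ with one barred and one unbarred index survive, together with $\widetilde{R}_{ijk\overline{l}}$, and the Ricci contributions multiplied by those deltas; this is where the second-derivative $c$ expression $\nabla_i(\nabla_k c_{jh}-\nabla_h c_{jk})-\nabla_j(\nabla_k c_{ih}-\nabla_h c_{ik})$ together with the $R\cdot c$ terms must combine against $R_{ij}$-multiples to force the stated condition. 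Components with two or more barred indices will either vanish identically or reduce to the vanishing of $R_{ij}$ alone, since $(\overline{g}_{\nabla,c})_{\overline{i}\,\overline{j}}=0$ and most curvature components with barred slots are zero.

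I would then argue the two directions. For necessity, collecting the purely Ricci-type consequences from the mixed-index components forces $R_{ij}=0$ (the base is Ricci flat); feeding $R_{ij}=0$ back into the all-unbarred equation kills the bracketed Ricci-times-$c$ term and the $R_{ijkl}$-from-$\widetilde{R}$ contribution must then be balanced, but since the paper's statement keeps $c$-conditions rather than demanding $R_{ijkl}=0$, the residual equation is precisely the stated curvature-$c$ identity $\nabla_i(\nabla_k c_{jh}-\nabla_h c_{jk})-\nabla_j(\nabla_k c_{ih}-\nabla_h c_{ik})+R_{ijk}{}^{m}c_{mh}-R_{ijh}{}^{m}c_{km}=0$, where I must be careful that the sign in front of the first $R\cdot c$ term is $+$ here (contrast with Theorem \ref{theore1}, which has a $-$), the discrepancy tracing to which index of $\widetilde{R}$ is being lowered. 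For sufficiency, substituting $R_{ij}=0$ and the stated $c$-identity back shows every component of $\widetilde{V}$ vanishes. Reversing the steps completes the proof.

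The main obstacle I anticipate is the bookkeeping in the mixed-index components: correctly lowering indices on $\widetilde{R}$ through the non-diagonal metric $(\overline{g}_{\nabla,c})$, keeping track of which $\widetilde{R}_{\alpha\beta\gamma}{}^{\varepsilon}$ from Proposition \ref{propo3} contributes after the Kronecker deltas from the metric contract things, and verifying that the $c$-dependent curvature terms in $\widetilde{R}_{ijkl}$ do \emph{not} get cancelled by the conharmonic correction term (because that correction only involves Ricci, not the full $c$-twisted curvature) so that the stated identity genuinely survives. I would organize this by first writing out $\widetilde{R}_{ijkl}$, $\widetilde{R}_{ijk\overline{l}}$, $\widetilde{R}_{ij\overline{k}l}$ explicitly, then tabulating $\widetilde{V}$ for each index pattern, and only at the end reading off the equivalence.
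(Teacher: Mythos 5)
Your proposal is correct in substance and follows essentially the same route as the paper: lower the index in Proposition \ref{propo3} through the metric (\ref{BA3.1}) (catching the sign change to $+R_{ijk}{}^{m}c_{mh}$ in the $(0,4)$-components), impose vanishing of $\widetilde{V}_{\alpha \beta \gamma \varepsilon }$ componentwise in the adapted frame, contract the mixed component (the paper's (\ref{BA33.3})) to force Ricci flatness, and read the $c$-identity off the all-unbarred component (the paper's (\ref{BA33.2})). The only wobble is that your middle paragraph places the second-derivative $c$-expression in the one-barred component $\widetilde{V}_{ijk\overline{l}}$, whereas that component involves only $R_{ijk}{}^{l}$ and Ricci terms; your final paragraph assigns the roles correctly, in agreement with the paper's argument.
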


\begin{proof}
If the components of the curvature tensor $\widetilde{R}$ of the metric
connection $\widetilde{\nabla }$ on $T^{\ast }M$ satisfy the following
equations: 
\begin{subequations}
\begin{equation}
\widetilde{R}_{\alpha \beta \gamma \varepsilon }=\frac{1}{2(n-1)}\left[ 
\widetilde{R}_{\beta \gamma }(\overline{g}_{\nabla ,c})_{\alpha \varepsilon
}-\widetilde{R}_{\alpha \gamma }(\overline{g}_{\nabla ,c})_{\beta
\varepsilon }-\widetilde{R}_{\beta \varepsilon }(\overline{g}_{\nabla
,c})_{\alpha \gamma }+\widetilde{R}_{\alpha \varepsilon }(\overline{g}%
_{\nabla ,c})_{\beta \gamma }\right] ,  \label{BA33.1}
\end{equation}%
then $T^{\ast }M$ is said to be locally conharmonically flat with respect to
the metric connection $\widetilde{\nabla }$.

On lowering the upper index in the proposition \ref{propo3}, we obtain the
components of the $(0,4)-$curvature tensor of the metric connection $%
\widetilde{\nabla }$ as follows: 
\end{subequations}
\begin{equation*}
\left\{ 
\begin{array}{c}
\widetilde{R}_{ijkl}=+\frac{1}{2}\{\nabla _{i}(\nabla _{k}c_{jl}-\nabla
_{l}c_{jk})-\nabla _{j}(\nabla _{k}c_{il}-\nabla _{l}c_{ik}) \\ 
+R_{ijk}^{\text{ \ \ \ }m}c_{ml}-R_{ijl}^{\text{ \ \ \ }m}c_{km}\} \\ 
\widetilde{R}_{ijk\overline{l}}=R_{ijk}^{\text{ \ \ \ }l} \\ 
\widetilde{R}_{ij\overline{k}l}=R_{jil}^{\text{ \ \ \ }k} \\ 
otherwise=0.%
\end{array}%
\right.
\end{equation*}%
Putting the values of $\widetilde{R}_{\alpha \beta \gamma \varepsilon }$, $%
\widetilde{R}_{\alpha \beta }$ and $(\overline{g}_{\nabla ,c})_{\beta
\varepsilon }$ respectively in (\ref{BA33.1}), we have%
\begin{eqnarray}
&&\nabla _{i}(\nabla _{k}c_{jl}-\nabla _{l}c_{jk})-\nabla _{j}(\nabla
_{k}c_{il}-\nabla _{l}c_{ik})+R_{ijk}^{\text{ \ \ \ }m}c_{ml}-R_{ijl}^{\text{
\ \ \ }m}c_{km}  \label{BA33.2} \\
&=&\frac{1}{2(n-1)}(R_{jk}c_{il}-R_{ik}c_{jl}-R_{jl}c_{ik}+R_{il}c_{jk}) 
\notag
\end{eqnarray}%
\begin{equation}
R_{ijk}^{\text{ \ \ }l}=\frac{1}{2(n-1)}(R_{jk}\delta _{i}^{l}-R_{ik}\delta
_{j}^{l})  \label{BA33.3}
\end{equation}%
\begin{equation*}
-R_{jil}^{\text{ \ \ }k}=\frac{1}{2(n-1)}(R_{il}\delta _{j}^{k}-R_{jl}\delta
_{i}^{k}).
\end{equation*}%
Contraction $i$ and $l$ in (\ref{BA33.3}) gives%
\begin{eqnarray*}
R_{ljk}^{\text{ \ \ }l} &=&\frac{1}{2(n-1)}(R_{jk}\delta
_{l}^{l}-R_{lk}\delta _{j}^{l}) \\
R_{jk} &=&\frac{1}{2(n-1)}(nR_{jk}-R_{jk}) \\
R_{jk} &=&\frac{1}{2(n-1)}R_{jk}(n-1) \\
R_{jk} &=&0,
\end{eqnarray*}%
that is, the torsion-free linear connection $\nabla $ is Ricci flat. In the
case, from (\ref{BA33.2}), it follows that%
\begin{equation*}
\nabla _{i}(\nabla _{k}c_{jl}-\nabla _{l}c_{jk})-\nabla _{j}(\nabla
_{k}c_{il}-\nabla _{l}c_{ik})+R_{ijk}^{\text{ \ \ \ }m}c_{ml}-R_{ijl}^{\text{
\ \ \ }m}c_{km}=0.
\end{equation*}
\end{proof}

\section{The Schouten-van Kampen connection associated to the Levi-Civita
connection of the modified Riemannian extension}

The Schouten-Van Kampen connection has been introduced in \cite{Schouten}
for a study of non-holonomic manifolds. The Schouten-Van Kampen connection
associated to the Levi-Civita connection $\overline{\nabla }$ of the
modified Riemannian extension $\overline{g}_{\nabla ,c}$ and adapted to the
pair of distributions $(H,V)$ are defined by 
\begin{equation}
\overline{\nabla }^{\ast }{}_{\widetilde{X}}\widetilde{Y}=H(\overline{\nabla 
}_{\widetilde{X}}H\widetilde{Y})+V(\overline{\nabla }_{\widetilde{X}}V%
\widetilde{Y})  \label{BA4.1}
\end{equation}%
for all vector fields $\widetilde{X}$ and $\widetilde{Y}$, where $V$ and $H$
are the projection morphism of $TT^{\ast }M$ on $VT^{\ast }M$ and $HT^{\ast
}M$ respectively. The formula (\ref{BA4.1}) for $\overline{\nabla }^{\ast }$
has been first given by Ianus (see, \cite{Ianus}). By using (\ref{BA4.1})
and (\ref{BA3.3}), the Schouten-Van Kampen connection associated to the
Levi-Civita connection $\overline{\nabla }$ of the modified Riemannian
extension $\overline{g}_{\nabla ,c}$ are locally given by the following
formulas:%
\begin{equation*}
\left\{ 
\begin{array}{c}
\overline{\nabla }_{E_{\overline{i}}}^{\ast }E_{\overline{j}}=0,\text{ }%
\overline{\nabla }_{E_{\overline{i}}}^{\ast }E_{j}=0, \\ 
\overline{\nabla }_{E_{i}}^{\ast }E_{\overline{j}}=-\Gamma _{ih}^{j}E_{%
\overline{h}},\text{ }\overline{\nabla }_{E_{i}}^{\ast }E_{j}=\Gamma
_{ij}^{h}E_{h},%
\end{array}%
\right. 
\end{equation*}%
which are the components of the horizontal lift $^{H}\nabla $ of the
torsion-free linear connection $\nabla $. Hence we get:

\begin{proposition}
\label{propo2}Let $\nabla $ be a torsion-free linear connection on $M$ and $%
T^{\ast }M$ be the cotangent bundle with the modified Riemannian extension $%
\overline{g}_{\nabla ,c}$ over $(M,\nabla )$. The Schouten-Van Kampen
connection $\overline{\nabla }^{\ast }$ associated to the Levi-Civita
connection $\overline{\nabla }$ of the modified Riemannian extension $%
\overline{g}_{\nabla ,c}$ and the horizontal lift $^{H}\nabla $ of the
torsion-free linear connection $\nabla $ to $T^{\ast }M$ coincide to each
other.
\end{proposition}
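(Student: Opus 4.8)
The plan is to evaluate the defining formula (\ref{BA4.1}) for $\overline{\nabla}^{\ast}$ directly on the adapted frame $\{E_{\beta}\}$ and then to match the outcome against the adapted-frame description of the horizontal lift $^{H}\nabla$ recorded just before Proposition \ref{propo1}. First I would record the elementary fact that, for the pair of distributions $(H,V)$, the horizontal distribution is spanned by the $E_{i}$ and the vertical distribution by the $E_{\overline{i}}$, so that the projection morphisms act on a tangent vector $a^{i}E_{i}+a^{\overline{i}}E_{\overline{i}}$ by $H(a^{i}E_{i}+a^{\overline{i}}E_{\overline{i}})=a^{i}E_{i}$ and $V(a^{i}E_{i}+a^{\overline{i}}E_{\overline{i}})=a^{\overline{i}}E_{\overline{i}}$. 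In particular $HE_{j}=E_{j}$, $VE_{j}=0$, $HE_{\overline{j}}=0$, $VE_{\overline{j}}=E_{\overline{j}}$.

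Next I would substitute the Levi-Civita connection $\overline{\nabla}$ of $\overline{g}_{\nabla ,c}$ from (\ref{BA3.3}) into $\overline{\nabla}^{\ast}{}_{\widetilde{X}}\widetilde{Y}=H(\overline{\nabla}_{\widetilde{X}}H\widetilde{Y})+V(\overline{\nabla}_{\widetilde{X}}V\widetilde{Y})$ and run through the four cases $(\widetilde{X},\widetilde{Y})\in\{(E_{\overline{i}},E_{\overline{j}}),(E_{\overline{i}},E_{j}),(E_{i},E_{\overline{j}}),(E_{i},E_{j})\}$. The mechanism is the same in every case: when $\widetilde{Y}=E_{j}$ only the term $H(\overline{\nabla}_{\widetilde{X}}E_{j})$ survives, and the correction $\{p_{s}R_{hji}{}^{s}+\tfrac{1}{2}(\nabla_{i}c_{jh}+\nabla_{j}c_{ih}-\nabla_{h}c_{ij})\}E_{\overline{h}}$ occurring in $\overline{\nabla}_{E_{i}}E_{j}$ is purely vertical and is annihilated by $H$, leaving $\Gamma_{ij}^{h}E_{h}$; when $\widetilde{Y}=E_{\overline{j}}$ only $V(\overline{\nabla}_{\widetilde{X}}E_{\overline{j}})$ survives, and $\overline{\nabla}_{E_{i}}E_{\overline{j}}=-\Gamma_{ih}^{j}E_{\overline{h}}$ is already vertical, so $V$ fixes it; and the two cases with $\widetilde{X}=E_{\overline{i}}$ vanish because $\overline{\nabla}_{E_{\overline{i}}}E_{\overline{j}}=\overline{\nabla}_{E_{\overline{i}}}E_{j}=0$. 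This produces
\[
\overline{\nabla}_{E_{\overline{i}}}^{\ast}E_{\overline{j}}=0,\quad \overline{\nabla}_{E_{\overline{i}}}^{\ast}E_{j}=0,\quad \overline{\nabla}_{E_{i}}^{\ast}E_{\overline{j}}=-\Gamma_{ih}^{j}E_{\overline{h}},\quad \overline{\nabla}_{E_{i}}^{\ast}E_{j}=\Gamma_{ij}^{h}E_{h}.
\]

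Finally I would observe that these are exactly the adapted-frame formulas for $^{H}\nabla$ quoted before Proposition \ref{propo1}; since a linear connection on $T^{\ast}M$ is uniquely determined by its action on the local frame $\{E_{\beta}\}$, it follows that $\overline{\nabla}^{\ast}={}^{H}\nabla$. I do not expect any genuine obstacle here: the only thing to be careful about is the horizontal/vertical bookkeeping, and the whole content of the statement is precisely that the modification term $\tfrac{1}{2}(\nabla_{i}c_{jh}+\nabla_{j}c_{ih}-\nabla_{h}c_{ij})E_{\overline{h}}$ together with the curvature contribution $p_{s}R_{hji}{}^{s}E_{\overline{h}}$ both lie in the vertical distribution and hence are invisible to $\overline{\nabla}^{\ast}$.
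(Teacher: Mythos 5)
Your proposal is correct and follows essentially the same route as the paper: the authors likewise substitute the Levi-Civita connection (\ref{BA3.3}) into the defining formula (\ref{BA4.1}), observe that the $p_{s}R_{hji}{}^{s}$ and $\tfrac{1}{2}(\nabla_{i}c_{jh}+\nabla_{j}c_{ih}-\nabla_{h}c_{ij})$ terms are vertical and hence projected away, and identify the resulting adapted-frame components with those of $^{H}\nabla$.
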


In view of Proposition \ref{propo1}, Proposition \ref{propo2}, Theorem \ref%
{theore1} and its proof, it immediately follows the final result.

\begin{theorem}
Let $\nabla $ be a torsion-free linear connection on $M$ and $T^{\ast }M$ be
the cotangent bundle with the modified Riemannian extension $\overline{g}%
_{\nabla ,c}$ over $(M,\nabla )$. The cotangent bundle $T^{\ast }M$ is
semi-symmetric with respect to the Schouten-Van Kampen connection $\overline{%
\nabla }^{\ast }$ associated to the Levi-Civita connection $\overline{\nabla 
}$ of the modified Riemannian extension $\overline{g}_{\nabla ,c}$ if and
only if the base manifold $M$ is semi-symmetric with respect to $\nabla $.
\end{theorem}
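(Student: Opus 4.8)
The plan is to combine three facts already established in the paper. By Proposition \ref{propo2}, the Schouten-Van Kampen connection $\overline{\nabla}^{\ast}$ associated to the Levi-Civita connection $\overline{\nabla}$ of the modified Riemannian extension $\overline{g}_{\nabla,c}$ coincides with the horizontal lift $^{H}\nabla$ of $\nabla$ to $T^{\ast}M$. By Proposition \ref{propo1}, the metric connection $\widetilde{\nabla}$ of the modified Riemannian extension coincides with $^{H}\nabla$ precisely when the components $c_{ij}$ satisfy $\nabla_{i}c_{jh}+\nabla_{j}c_{ih}-\nabla_{h}c_{ij}=0$. So the first step is to observe that, in general, $\overline{\nabla}^{\ast}$ and $\widetilde{\nabla}$ need not be the same connection, and hence I cannot simply quote the semi-symmetry theorem for $\widetilde{\nabla}$ verbatim; instead I will rework the curvature computation directly for $^{H}\nabla$.

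Next I would compute the curvature tensor $R^{\ast}$ of $\overline{\nabla}^{\ast}=\,^{H}\nabla$ in the adapted frame, using the local formulas for $^{H}\nabla$ displayed just before Proposition \ref{propo1} together with the Lie bracket identities (\ref{BA2.3}). This is the same type of calculation as in Proposition \ref{propo3}, but now with the connection coefficients containing no $c$-terms at all. The outcome will be that the only nonzero components are $R^{\ast}(E_{i},E_{j})E_{k}=R_{ijk}^{\ \ \ h}E_{h}$ and $R^{\ast}(E_{i},E_{j})E_{\overline{k}}=R_{jih}^{\ \ \ k}E_{\overline{h}}$, with all mixed and purely vertical components vanishing. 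In other words, $R^{\ast}$ is obtained from $\widetilde{R}$ of Proposition \ref{propo3} by simply deleting the $c$-dependent terms in the $E_{\overline{h}}$-component of $R^{\ast}(E_{i},E_{j})E_{k}$. Equivalently, $R^{\ast}$ agrees with the horizontal lift of the curvature tensor $R$ of $\nabla$.

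Then I would carry out the computation of $(R^{\ast}(\widetilde{X},\widetilde{Y})R^{\ast})(\widetilde{Z},\widetilde{W})\widetilde{U}$ in the adapted frame exactly as in the proof of Theorem \ref{theore1}, running over all index types $\alpha=(i,\overline{i})$, etc. Because $R^{\ast}$ has no $c$-terms, the auxiliary hypothesis appearing in Theorem \ref{theore1} (namely $\nabla_{i}(\nabla_{k}c_{jh}-\nabla_{h}c_{jk})-\nabla_{j}(\nabla_{k}c_{ih}-\nabla_{h}c_{ik})-R_{ijk}^{\ \ \ m}c_{mh}-R_{ijh}^{\ \ \ m}c_{km}=0$) is automatically satisfied here — there is simply nothing of that form to kill. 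The purely horizontal component reduces to $((R(X,Y)R)(Z,W)U)_{ijklm}^{\ \ \ \ \ \ \ h}$, the component with one barred index reduces to $-((R(X,Y)R)(Z,W)U)_{ijklh}^{\ \ \ \ \ \ \ m}$, and every remaining component vanishes identically. Hence $(R^{\ast}(\widetilde{X},\widetilde{Y})R^{\ast})(\widetilde{Z},\widetilde{W})\widetilde{U}=0$ for all vector fields on $T^{\ast}M$ if and only if $(R(X,Y)R)(Z,W)U=0$ for all vector fields on $M$, i.e. $T^{\ast}M$ is semi-symmetric with respect to $\overline{\nabla}^{\ast}$ if and only if $(M,\nabla)$ is semi-symmetric.

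The main obstacle, such as it is, is conceptual rather than computational: one must resist the temptation to invoke Theorem \ref{theore1} directly, since $\overline{\nabla}^{\ast}$ equals $\widetilde{\nabla}$ only under the extra condition of Proposition \ref{propo1}. Once it is recognized that $\overline{\nabla}^{\ast}=\,^{H}\nabla$ has curvature equal to the horizontal lift of $R$ (a $c$-free object), the argument is a routine case-by-case bookkeeping of adapted-frame components, entirely parallel to — and in fact simpler than — the proof of Theorem \ref{theore1}, because the cumbersome $c$-dependent term in $\widetilde{R}(E_{i},E_{j})E_{k}$ is absent from the start. Alternatively, and even more cleanly, one can appeal to the general principle that the horizontal lift $^{H}\nabla$ and its curvature $^{H}R$ mirror the base objects $\nabla$ and $R$ under the lift operations on tensor fields of $M$ to $T^{\ast}M$ described in \cite{YanoIshihara:DiffGeo}; since the operator $R(X,Y)$ acting on tensors is intertwined with $^{H}R(\,^{H}X,^{H}Y)$ acting on horizontal and vertical lifts, the condition $^{H}R\cdot{}^{H}R=0$ on $T^{\ast}M$ is equivalent to $R\cdot R=0$ on $M$. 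I would present the adapted-frame version as the explicit proof, noting the simplification over Theorem \ref{theore1}.
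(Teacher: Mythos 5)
Your proposal is correct and is essentially the paper's own argument made explicit: the paper merely states that the result ``immediately follows'' from Proposition \ref{propo2} (identifying $\overline{\nabla }^{\ast }$ with $^{H}\nabla $), Proposition \ref{propo1} and the proof of Theorem \ref{theore1}, which amounts to exactly the adapted-frame computation you carry out, where the curvature of $^{H}\nabla $ has no $c$-dependent $E_{\overline{h}}$-component and so the auxiliary hypothesis of Theorem \ref{theore1} is vacuous. Your remark that one cannot quote Theorem \ref{theore1} verbatim (since $\widetilde{\nabla }=\,^{H}\nabla $ only under the condition of Proposition \ref{propo1}) is a fair and worthwhile clarification of what the paper leaves implicit.
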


\bigskip

\end{document}